\renewcommand{\refname}{Bibliography}
\theoremstyle{plain}
\newenvironment{proof}[1][Proof]{\begin{trivlist}
\item[\hskip \labelsep {\bfseries #1}]}{\end{trivlist}}
\newtheorem{theorem}{Theorem}
\newtheorem{lemme}[theorem]{Lemma}
\newcommand{\prob}{\mathbf{P}}
\newcommand{\esp}{\mathbf{E}}
\newcommand{\var}{\mathbf{V}\text{\normalfont ar}}
\newcommand{\cov}{\mathbf{C} \textrm{ov}}
\title{Optimal quantization applied to Sliced Inverse Regression}
\author{Romain Aza\"\i s, Anne G\'egout-Petit, J\'er\^ome Saracco}
\date{\textit{INRIA Bordeaux Sud-Ouest, CQFD team\\
Institut de Math\'ematiques de Bordeaux, UMR CNRS 5251 \\
Universit\'e de Bordeaux}\\~\\
January 10, 2011}
\begin{document}
\maketitle

\vspace{2cm}

\begin{center}\textbf{Abstract}\end{center}
In this paper we consider a semiparametric regression model involving a $d$-dimensional quantitative explanatory variable $X$ and including a dimension reduction of $X$ via an index $\beta'X$.  In this model, the main goal is to estimate the euclidean parameter $\beta$ and to predict the real response variable $Y$ conditionally to $X$. Our approach is based on sliced inverse regression (SIR) method and optimal quantization in $\mathbf{L}^p$-norm. We obtain the convergence of the proposed estimators of $\beta$ and of the conditional distribution. Simulation studies show the good numerical behavior of the proposed estimators for finite sample size.

\paragraph{Keywords} Optimal quantization, Semiparametric regression model, Sliced Inverse Regression (SIR), Reduction dimension

\newpage

\section{Introduction}\label{intro}

In regression analysis, the main goal is to seek a parsimonious characterization of the conditional distribution of a response variable $Y$ given a $d$-dimensional explanatory variable $X$. 
In many statistical applications, the dimension $d$ of $X$ becomes large and therefore the statistical analysis becomes difficult. A usual approach to overcome this problem is to reduce the dimension of the explanatory part of the regression model without much loss of information on regression and without requiring a pre-specified parametric model. This has been achieved through the introduction of sufficient dimension reduction methods whose goal  is to reduce the dimension of $X$ by replacing it with a minimal set of linear combinations of $X$.

In this paper, we consider the following  semiparametric single index regression model
\begin{equation}\label{model}
 Y=f(\beta' X , \epsilon)
\end{equation}
where the real response variable $Y$ is linked, via an unknown link function $f$, to the $d$-dimensional random vector $X$ only through the unknown $d$-dimensional parameter $\beta$. The random variable $\epsilon$ is an error term independent of $X$. 
Model (\ref{model}) can also be defined as $Y\perp X | \beta'X$ where ``$\perp$'' stands for independence. This means that for the regression of $Y$ on $X$, a sufficient statistic is given by $\beta'X$.
 
Sliced inverse regression (SIR) introduced by Duan and Li (1991), principal hessian directions (see for instance Cook, 1998) or sliced average variance estimation (SAVE) developed by Cook (2000) are classical methods for identifying and estimating  the linear subspace spanned by $\beta$. Without additional assumption, only this subspace is identifiable  in the model and is called the central dimension reduction subspace or the effective dimension reduction (EDR) space according to the considered approach. To estimate this subspace, SIR uses properties of the conditional expectation of $X$ given $Y$ under mild assumptions on the distribution of $X$ while SAVE is based on properties of the conditional variance of $X$ given $Y$. In this paper, we focus on SIR approach which has become  the most standard method in this area because of its simple and useful estimation scheme. The main idea is to divide the whole space of $Y$ into slices and to consider the SIR matrix of interest defined as the covariance matrix of the conditional mean of $X$ in each slice. More precisely, let $\Sigma=\var(X)$ 	and let us assume that the range of $Y$ is sliced into $H$ non-overlapping slices $S_h$. Let $\hat{Y}=\{h: Y\in S_h\}$ for $h=1,\dots,H$ a discrete version of the continuous response variable $Y$. Note that $\hat{Y}$ can be seen as the projection of $Y$ on a rough grid.
Under two assumptions (the first one concerns the distribution of $X$ and the other one is in order to ensure that the regression model is not a known pathological one), it can be shown that the principal eigenvector of the matrix $\Sigma^{-1} \hat{\Gamma}$ is collinear to $\beta$ where the SIR kernel matrix $\hat{\Gamma}$ is given by  $\hat{\Gamma}=\sum_{h=1}^H \prob(Y\in S_h) (\esp[X|Y\in S_h]-\esp[X])(\esp[X|Y\in S_h]-\esp[X])'$  and then can be easily estimated by substituting empirical versions of all the moments for their theoretical counterparts.
 
 \smallskip
 
 In this paper,  we first propose to use optimal quantization in order to find an approximation of the SIR kernel matrix $\hat{\Gamma}$. A brief panorama of optimal quantization is described in Section~2 in order to remind the principle of optimal quantization which is a key stone of our method. 
 In section 3, we describe the estimator of the direction of $\beta$.  The basic idea is to replace $X$ by $\hat{X}^N$ its optimal quantizer in $\mathbf{L}^p$-norm taking a finite number $N$ of values. Let us denote $\hat{\Gamma}_N=\var(\esp\big[\hat{X}^N | \hat{Y}\big])$. We show in Section~3 that $ \hat{\Gamma}_N$ converges to $\hat{\Gamma}$ as $N$ goes to infinity and we control the rate of convergence. From this result, we will deduce the convergence of principal eigenvectors of the sequence $(\hat{\Gamma}_N)$ to the direction of $\beta$. 
In practice, optimal quantization is  frequently used to compute approximations of  conditional expectations, see for instance de Saporta et al. (2010a) and de Saporta et al.  (2010b). In Section~4 we propose to use optimal quantization to forecast $Y$ given $X$, that is $Y$ given $\beta'X$ under the considered model. We provide a theoretical result which specifies that the forecast error tends to zero as the numbers of quantizers tend to infinity. The corresponding method is particularly interesting since most of the papers on SIR in the literature only focus on the estimation of the direction of $\beta$ and do not consider the underlying regression model in its entirety. Few theoretical results combining SIR to estimate the central reduction space and nonparametric estimator of the link function are available in the literature, for instance Gannoun et al. (2004) is one of them. 
In Section~5, we illustrate our approach on simulation studies and we provide numerical results to illustrate the good behavior of the proposed estimator for finite sample size. 
 All the proofs of the mathematical results of convergence are deferred  in the Appendix.
 
\section{About optimal quantization}
\label{quantiz} 

Originally, the word ``quantization'' was used in signal and information theories by engineers since the fifties. Quantization was devoted to the discretization of a continuous signal by a finite number of ``quantizers''. It is very useful to optimize the position of the ``quantizers'' to have an efficient transmission of the signal. In mathematics, the problem of optimal quantization is to find the best approximation of the continuous distribution of a random variable by a discrete law with a fixed number $N$ of charged points. Firstly used for a one-dimensional signal, the method has been developed in the multi-dimensional case (see for instance Zador (1963) or Pag\`es (1998)) and extensively used as a tool to solve problems arising in numerical probability. It is also frequently used to solve problems in finance, as time optimal stopping, control or filtering (see for example Pag\`es et al. (2004a), Pag\`es et al. (2004b), Bally et al. (2005), Pag\`es and Pham (2005)). More recently de Saporta et al. (2010b) used quantization in order to develop a numerical method for  optimal stopping of Piecewise Deterministic Markovian Processes with an application to the optimization of reliability maintenance, see de Saporta et al. (2010a). 

Optimal quantization is well-adapted to the  approximation of  conditional expectation. In this paper we use it to tackle the estimation of the conditional distribution  of $Y$ given $\beta'X$ in the regression problem (\ref{model}). We will also specify how to use quantization in the estimation process of the SIR kernel matrix $\widehat{\Gamma}$. 

In the sequel of this section, let us first present the principle of the quantization method for a random vector $X$. Then we will provide a result on forecasting via quantization in nonparametric regression model. 

\subsection{Optimal quantization for a random vector $X$}

Let $X$ be a random vector from a probability space $(\Omega,\mathcal{F},\prob)$  to $\mathbf{R}^d$. We suppose that $X$ is finite in $ \mathbf{L}^p$-norm for some $p\geq 1$; that is to say$\| X  \|_p= \esp [ \vert X \vert^p]^{1/p}$ is finite (for $x \in \mathbf{R}^d$, $\vert x \vert $ denotes the Euclidean norm on $ \mathbf{R}^d$).
The purpose of quantization is to approximate the continuous distribution of $X$ by a discrete one with a finite support whose cardinality is $N$. 
Let  $\gamma_N$ be an $N$-grid of $\mathbf{R}^d$.
Let $\textrm{Proj}_{\gamma_N}(x)$ be the point of $\gamma_N$ which is the nearest one of $x$ for Euclidean norm. The quantization error with respect to $\gamma_N$ is
$$ 	Q_N^p ( \prob_X) (\gamma_N) = \| X - \textrm{Proj}_{\gamma_N}(X)\|_p^p	.	$$

Existence (but not uniqueness) of an optimal $N$-grid which minimizes $Q_N^p(\prob_X)(\cdot)$ vanishing its gradient has been shown under the following assumption about $\prob_X$: $\prob_X$ does not charge hyperplanes. 
From now on, for any random vector  $X$ in $\mathbf{L}^p$ which verifies this assumption, let us denote the projection on an optimal $N$-grid of $X$ by $\hat{X}^N$. 
Note that the vector $\hat{X}^N$ is a discrete random vector which verifies the following useful stationarity property:
\begin{equation}	\label{stationnarity}
\esp[ X | \hat{X}^N ] = \hat{X}^N.
\end{equation}

Some results about asymptotic quantization error have been obtained by  Zador (1963). The following theorem (see Corollary II.6.7 of Luschgy and Graph (2000)) is a generalization of a result due to Pierce (1970).  It gives the rate of convergence of the discrete approximation $\hat{X}^N$ to $X$ in $\mathbf{L}^p$ for great values of $N$ and will be very useful in our progress. 
\begin{theorem} \label{label-pierce}
If $\|X\|_{p+\delta}$ is finite for some $\delta>0$, then there exist real numbers $D_1,D_2 ,D_3$ such that for all $N \geq D_3$, we have
\begin{equation}\label{aaa}
\| X - \hat{X}^N \|_p^p \leq \frac{1}{N^{p/d}} \Big( D_1 \|X\|_{p+\delta}^{p+\delta} + D_2 \Big). 
\end{equation}
\end{theorem}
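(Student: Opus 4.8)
This is the $d$-dimensional form of Pierce's lemma; the plan is to reprove it by an explicit construction of near-optimal grids rather than to invoke the cited reference. The first step is a trivial reduction: since $\hat X^N$ is the nearest-neighbour projection of $X$ onto an \emph{optimal} $N$-grid, $\|X-\hat X^N\|_p^p\le\|X-\textrm{Proj}_\gamma(X)\|_p^p$ for every $N$-grid $\gamma$, so it is enough to build, for each $N$ above some threshold, one (possibly very suboptimal) $N$-grid whose $\mathbf{L}^p$-quantization error satisfies the desired bound. It is in fact cleaner to aim first at the scale-covariant estimate $\|X-\textrm{Proj}_\gamma(X)\|_p^p\le C(d,p,\delta)\,\|X\|_{p+\delta}^p\,N^{-p/d}$ and then to recover (\ref{aaa}) from the elementary inequality $t^p\le t^{p+\delta}+1$ ($t\ge0$) with $t=\|X\|_{p+\delta}$, which is what makes the $(p+\delta)$-th power and the additive constant $D_2$ show up.

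For the construction I would work at the scale $R:=\|X\|_{p+\delta}$, splitting $\mathbf{R}^d$ into the central cube $C_0=\{|x|_\infty\le R\}$ and the growing shells $C_k=\{R2^{k-1}\le|x|_\infty<R2^k\}$, $k\ge1$, and spending a geometrically decreasing share of the points on each. Fixing $\theta\in\big(2^{-d\delta/p},1\big)$, I allocate to the cube enclosing $C_k$ a uniform product sub-grid of about $N_k\simeq(1-\theta)\theta^kN$ points, dropping the shells with $N_k<1$; since $\sum_{k\ge0}(1-\theta)\theta^k=1$ the total number of points stays $\le N$. In $C_k$ each sub-cube then has edge of order $R2^kN_k^{-1/d}$, so the contribution of $C_k$ to $\|X-\textrm{Proj}_\gamma(X)\|_p^p$ is of order $R^p2^{kp}N_k^{-p/d}\,\prob(X\in C_k)$; Markov's inequality applied to $|X|^{p+\delta}$ gives $\prob(X\in C_k)\le\|X\|_{p+\delta}^{p+\delta}(R2^{k-1})^{-(p+\delta)}$, and since $R=\|X\|_{p+\delta}$ this contribution collapses to a constant times $\|X\|_{p+\delta}^p\,2^{-k\delta}N_k^{-p/d}$.

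It then remains to add the three error sources. The shells give $\sum_k\|X\|_{p+\delta}^p\,2^{-k\delta}N_k^{-p/d}$, a geometric series of ratio $\theta^{-p/d}2^{-\delta}<1$ by the choice of $\theta$, hence of order $\|X\|_{p+\delta}^pN^{-p/d}$ (this yields $D_1$); the central cube contributes at most $(RN_0^{-1/d})^p=\|X\|_{p+\delta}^pN_0^{-p/d}$, again $O(\|X\|_{p+\delta}^pN^{-p/d})$, and replacing each $N_k$ by $\lfloor N_k\rfloor$ loses only a comparable factor; the region left without any point lies in $\{|X|_\infty\ge R2^K\}$ for the last retained index $K$ (so $\theta^KN\simeq1$ and $2^K$ is a fixed positive power of $N$), and mapping it to one grid point costs $\esp\!\big[|X|^p\mathbf{1}_{\{|X|\ge R2^{K-1}\}}\big]\le(R2^{K-1})^{-\delta}\|X\|_{p+\delta}^{p+\delta}=\|X\|_{p+\delta}^p\,2^{-(K-1)\delta}$, which is $O(\|X\|_{p+\delta}^pN^{-p/d})$ precisely because $\theta\ge2^{-d\delta/p}$. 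Summing gives the scale-covariant bound for every $N\ge D_3$, with $D_3$ just large enough that $N_0\ge1$ and the allocation is non-degenerate; the inequality $t^p\le t^{p+\delta}+1$ then delivers (\ref{aaa}).

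The step I expect to be the main obstacle is the simultaneous balancing in the previous paragraph: the budget $\sum_kN_k\le N$, the convergence of the series of shell errors, and the requirement that the uncovered tail decay no slower than $N^{-p/d}$ must all hold for one and the same $\theta$, and they do so only for $\theta$ in the window $\big(2^{-d\delta/p},1\big)$, which is non-empty exactly because $\delta>0$. Everything else — tracking the dependence of the constants on $d,p,\delta$ without ever using more of the law of $X$ than $\|X\|_{p+\delta}$, handling the Euclidean-versus-sup-norm passages, and the integer roundings — is routine. The structural role of the hypothesis is visible at this point: for $\delta=0$ the window is empty and no geometric ratio works, which is the reason why $\|X\|_p<\infty$ alone does not give the rate $N^{-p/d}$.
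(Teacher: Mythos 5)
The paper does not prove this theorem; it is quoted directly from Corollary II.6.7 of Graf and Luschgy (2000), so there is no in-paper proof to compare to. On its own merits your argument is correct, and it is essentially the classical dyadic-shell proof of Pierce's lemma, the same construction as in Graf--Luschgy, Chapter II, Section 6: reduce to a single competitor grid, stratify $\mathbf{R}^d$ into a central cube and dyadic annuli, allocate a geometrically decreasing budget of uniform grid points per stratum, and close with a Markov-type tail estimate. Your bookkeeping is right: the shell series converges when $\theta^{-p/d}2^{-\delta}<1$, the uncovered tail decays like $N^{-p/d}$ when $\theta\ge 2^{-d\delta/p}$, and both hold simultaneously precisely on the non-empty window $(2^{-d\delta/p},1)$, which correctly isolates the role of $\delta>0$. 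The final passage from the scale-covariant bound $\|X-\textrm{Proj}_\gamma(X)\|_p^p\le C\|X\|_{p+\delta}^p N^{-p/d}$ to the additive form (\ref{aaa}) via $t^p\le t^{p+\delta}+1$ is also exactly what produces the $D_1\|X\|_{p+\delta}^{p+\delta}+D_2$ shape, with $D_1=D_2=C$. One small point worth making explicit in a full write-up: on the uncovered event, mapping $X$ to a fixed point $x_0$ of the central sub-grid gives $|X-x_0|\le|X|+|x_0|$, and since $|x_0|$ is at most a constant times $R$ while $|X|\ge R2^{K}$ there, one has $|X-x_0|\le 2|X|$ once $2^K$ exceeds that constant; this is the step that lets you bound the last contribution by a constant times $\esp\big[|X|^p\mathbf{1}_{\{|X|\ge R2^{K-1}\}}\big]$. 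It does not affect the conclusion.
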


\subsection{Forecasting  in a nonparametric regression model}
\label{secsimple}

 Let us consider here the following nonparametric regression model:
\begin{equation}\label{pmodel}
Y = \tilde{f}(U, \epsilon),
\end{equation}
where $U:(\Omega , \mathcal{F}, \prob) \to \mathbf{R}^d$ is a random covariable, $\epsilon$ is a random term independent of $U$, and $ \tilde{f}$ is an unknown real link function.
We propose a method to forecast $Y$ given $U$ based on quantization approach.
First we quantize $U$ and $Y$.
Let  $\hat{U}^N$ and $\hat{Y}^N$ denote their optimal discrete approximations.
We denote by $\hat{P}$ the transition matrix from $\hat{U}^N$ to $\hat{Y}^N$, that is to say, if $\gamma_{N}$ and $\delta_N$ are optimal $N$-grids for quantization of $U$ and $Y$,

$$\forall \hat{u} \in \gamma_N,~\forall \hat{y} \in \delta_N,~\hat{P}(\hat{u},\hat{y}) = \prob(\hat{Y}^N = \hat{y} | \hat{U}^N = \hat{u}).$$

Consider the discrete random variable $\hat{Y}^c$ such that $\big( \hat{U}^N , \hat{Y}^c \big)$ is a stopped Markov chain with transition matrix $\hat{P}$. We propose $\hat{Y}^c$ as predictor for $Y$ given $U$. Actually, for a fixed $u$, we forecast $Y=\tilde{f}(u , \epsilon)$ by the conditional law of $\hat{Y}^N$ given $\hat{U}^N = \textrm{Proj}_{\gamma_N}(u)$.  So in all the results concerning the forecast of $Y$, it will be equivalent to put $\hat{Y}^c $ or $\hat{Y}^N$. We specify in Theorem~\ref{label-pred1}, how for a fixed $u$, the discrete law of probability given by $\hat{P}(\textrm{Proj}_{\gamma_N}(u),\hat{y})$ on the $N$ points $\hat{y}$ of the grid $\delta_N$, is a good approximation of the distribution of $Y$ given $U=u$. 
Knowing this distribution, we propose to use $\esp [ \hat{Y}^c | U=u]$ as a predictor of $Y$. Note that it is also easy to get conditional quantiles and a forecast interval for $Y$ with a given confidence level.

\medskip

In order to get the theoretical result, let us introduce the following assumptions:
\begin{displaymath}
\begin{array}{ll}
(\mathcal{A}_1) & \exists~ p\geq 1 ~ \textrm{s.t.} ~ U,Y \in \mathbf{L}^p,\\
 (\mathcal{A}_1')	&	\exists~ \delta >0 ~\textrm{s.t.}~ U,Y \in \mathbf{L}^{p+\delta},\\
(\mathcal{A}_2) & \exists~ [\tilde{f}]_{Lip}>0~\textrm{s.t.$~\forall u, v \in \mathbf{R}^d$,}~~ \| \tilde{f}(u,\epsilon) - \tilde{f}(v,\epsilon) 
		\|_p \leq [\tilde{f}]_{Lip} |u-v|, \\
(\mathcal{A}_3) & \textrm{The distributions of $U$ and $Y$ do not charge hyperplanes}.
\end{array}
\end{displaymath}

The following theorem gives the convergence of the conditional distribution of $\hat{Y}^N$ given $ \hat{U}^N$ to the conditional law of $Y$ given $U$ in the regression model ($\ref{pmodel})$. 
Actually, let $\phi$ be a Lipschitz function; we show that  the $\mathbf{L}^1$-norm of $\esp[\phi(Y) | U] - \esp\big[ \phi( \hat{Y}^N) | \hat{U}^N] $ is bounded by a quantity involving the $\mathbf{L}^p$-errors of quantization of the variables $Y$ and $U$. Thanks to Theorem~\ref{label-pierce}, the forecast error decreases to $0$ as $N$ approaches to infinity with rate $N^{-1/d}$.

\begin{theorem}
\label{label-pred1}
For all Lipschitz function $\phi$ with Lipschitz constant $[\phi]_{Lip}$, under assumptions $(\mathcal{A}_{1\to 3})$, we have
$$\Big\| \esp[\phi(Y) | U] - \esp\big[ \phi( \hat{Y}^N) | \hat{U}^N \big] \Big\|_1 \leq 2 [\phi]_{Lip} [\tilde{f}]_{Lip} \|U-\hat{U}^N\|_p + [\phi]_{Lip} \|Y-\hat{Y}^N\|_p.$$
Moreover, if assumption $(\mathcal{A}_1)$ is replaced by assumption $(\mathcal{A}_1')$, the rate of convergence is given by 
$$\Big\| \esp[\phi(Y) | U] - \esp\big[ \phi( \hat{Y}^N) | \hat{U}^N \big] \Big\|_1  =  \mathcal{O}\Big( \frac{1}{N^{1/d}} \Big).$$
\end{theorem}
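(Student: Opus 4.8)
The plan is to reduce everything to the Lipschitz regularity of the regression function $u \mapsto \esp[\phi(Y)\mid U=u]$, combined with the elementary contraction properties of conditional expectation, and then to invoke Theorem~\ref{label-pierce} for the rate.

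First I would set $\psi(u) = \esp[\phi(Y)\mid U=u]$, so that $\esp[\phi(Y)\mid U] = \psi(U)$. Since $\epsilon$ is independent of $U$ and $Y = \tilde f(U,\epsilon)$, one has $\psi(u) = \esp[\phi(\tilde f(u,\epsilon))]$; combining $[\phi]_{Lip}$ with assumption $(\mathcal{A}_2)$ and the inequality $\|\cdot\|_1 \le \|\cdot\|_p$ (valid for $p\ge 1$) gives
$$|\psi(u)-\psi(v)| \le \esp\big|\phi(\tilde f(u,\epsilon))-\phi(\tilde f(v,\epsilon))\big| \le [\phi]_{Lip}\,\|\tilde f(u,\epsilon)-\tilde f(v,\epsilon)\|_p \le [\phi]_{Lip}[\tilde f]_{Lip}\,|u-v|,$$
so $\psi$ is Lipschitz with constant $[\phi]_{Lip}[\tilde f]_{Lip}$.

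Next I would split the error by the triangle inequality through the intermediate term $\esp[\phi(Y)\mid \hat U^N]$:
$$\big\|\psi(U)-\esp[\phi(\hat Y^N)\mid \hat U^N]\big\|_1 \le \big\|\psi(U)-\esp[\phi(Y)\mid \hat U^N]\big\|_1 + \big\|\esp[\phi(Y)-\phi(\hat Y^N)\mid \hat U^N]\big\|_1.$$
The second summand is handled by Jensen's inequality for conditional expectation and $[\phi]_{Lip}$: it is at most $\esp|\phi(Y)-\phi(\hat Y^N)| \le [\phi]_{Lip}\|Y-\hat Y^N\|_1 \le [\phi]_{Lip}\|Y-\hat Y^N\|_p$. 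For the first summand, the crucial observation is that $\hat U^N = \textrm{Proj}_{\gamma_N}(U)$ is a measurable function of $U$, so by the tower property $\esp[\phi(Y)\mid \hat U^N] = \esp[\psi(U)\mid \hat U^N]$; inserting $\psi(\hat U^N)$ and using that it is $\hat U^N$-measurable together with Jensen once more yields
$$\big\|\psi(U)-\esp[\psi(U)\mid \hat U^N]\big\|_1 \le 2\,\big\|\psi(U)-\psi(\hat U^N)\big\|_1 \le 2[\phi]_{Lip}[\tilde f]_{Lip}\,\|U-\hat U^N\|_1 \le 2[\phi]_{Lip}[\tilde f]_{Lip}\,\|U-\hat U^N\|_p,$$
the middle inequality being the Lipschitz bound on $\psi$. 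Adding the two contributions gives the claimed inequality; replacing $\hat Y^N$ by $\hat Y^c$ changes nothing, as explained just before the statement.

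Finally, for the rate under $(\mathcal{A}_1')$, I would apply Theorem~\ref{label-pierce} separately to $U$ and to $Y$ — whose $(p+\delta)$-norms are finite by $(\mathcal{A}_1')$ and whose laws do not charge hyperplanes by $(\mathcal{A}_3)$ — to obtain $\|U-\hat U^N\|_p = \mathcal{O}(N^{-1/d})$ and $\|Y-\hat Y^N\|_p = \mathcal{O}(N^{-1/d})$, so that the whole right-hand side is $\mathcal{O}(N^{-1/d})$. I do not expect a serious obstacle: the only points that need care are the measurability remark that collapses $\esp[\phi(Y)\mid \hat U^N]$ to $\esp[\psi(U)\mid \hat U^N]$, and the fact that the contraction of conditional expectation merely costs a factor $2$ rather than spoiling the Lipschitz estimate.
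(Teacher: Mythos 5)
Your proof is correct and follows essentially the same route as the paper's: both hinge on (a) establishing that $u\mapsto\esp[\phi(Y)\mid U=u]$ is Lipschitz with constant $[\phi]_{Lip}[\tilde f]_{Lip}$, (b) a three-way triangle-inequality decomposition (yours is nested as two-then-two, but the resulting three pieces match the paper's $F(U)-F(\hat U)$, $F(\hat U)-\esp[\phi(Y)\mid\hat U]$, $\esp[\phi(Y)\mid\hat U]-\hat F(\hat U)$ exactly), (c) the $\mathbf{L}^p$-contraction of conditional expectation together with the tower property (using that $\hat U^N$ is $\sigma(U)$-measurable), and (d) Theorem~\ref{label-pierce} for the $\mathcal{O}(N^{-1/d})$ rate. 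The only cosmetic difference is that the paper bounds everything in $\mathbf{L}^p$-norm and then passes to $\mathbf{L}^1$ at the end, whereas you work in $\mathbf{L}^1$ throughout and invoke $\|\cdot\|_1\le\|\cdot\|_p$ term by term; the content is identical.
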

The proof of this theorem is deferred in the Appendix A.1.

\medskip

In the next two sections, we will first estimate the linear subspace spanned by $\beta$ in model (\ref{model}) using optimal quantization. Then, we combine the proposed estimator of the EDR direction with the previous forecasting approach based on optimal quantization in order to predict $Y$ given $X$ in regression model (\ref{model}).

\section{Estimation method of the direction of $\beta$}
\label{sir}

Now, consider the semiparametric regression model (\ref{model}). Let us first remark that, since $\beta$ and $f$ are simultaneously unknown in this model, we can only identify the linear subspace spanned by $\beta$. 
Let us also recall that $\hat{\Gamma}_N$ denotes the covariance matrix of $\esp[\hat{X}^N | \hat{Y} ]$ where $\hat{Y}=\textrm{Proj}_\gamma ( Y)$ is the projection on a (non necessary optimal) grid $\gamma$ of $\mathbf{R}$.
Let $\tilde{\beta}_N$ be a principal eigenvector of the matrix $\Sigma^{-1} \hat{\Gamma}_N$.
The next result exhibited in Theorem~\ref{th-cosca} says that, for a large $N$, the direction of $\tilde{\beta}_N$ is a good approximation of the one of $\beta$. Actually,  we also give in Theorem~\ref{label-estimation1} a stronger result: there exists a sequence $(\beta_N)$ of principal eigenvectors of  $\Sigma^{-1} \hat{\Gamma}_N$ which converges to $\beta$ when the number $N$ of quantizers of $X$ goes to infinity. This result is due to the fact that the matrices $\hat{\Gamma}$ and $\hat{\Gamma}_N$ are very close for great $N$ (see Lemma  \ref{lemma1} given in Appendix A.2.). 

We need the following additional assumptions which are usually assumed in SIR framework: 
\begin{displaymath}
\begin{array}{ll}
(\mathcal{A}_5)		&	\exists \hat{y} \in \gamma,~ \esp[ (X-\mu)'\beta | \hat{Y}=\hat{y}] \neq 0,	\\
(\mathcal{A}_6)		&	\textrm{$X$ has an elliptically symmetric distribution.}
\end{array}
\end{displaymath}
The next two assumptions enable optimal quantization of $X$:
\begin{displaymath}
\begin{array}{ll}
(\mathcal{A}_7) 	&  \exists~p \geq 1~\textrm{s.t.}~X \in \mathbf{L}^p \cap \mathbf{L}^q ~\textrm{with $\frac{1}{p}+\frac{1}{q}=1$},\\
(\mathcal{A}_{8}) 	& \textrm{The distribution of $X$ does not charge hyperplanes,}\\
(\mathcal{A}_{9})	&	\exists~ \delta>0 ~\textrm{s.t.}~ X \in \mathbf{L}^{p+\delta}.
\end{array}
\end{displaymath}

\noindent

The next result gives the convergence of the direction of $(\tilde{\beta}_N)$,  for any sequence $(\tilde{\beta}_N)$ of principal eigenvectors of the sequence $(\Sigma^{-1} \hat{\Gamma}_N)$ to the direction of $\beta$  as the number of quantizer $N$ tends to infinity. For this, we need to define $\cos^2 (\tilde{\beta}_N , \beta)=\displaystyle \frac{ (\tilde{\beta}'_N \beta)^2 }{{ ({\tilde{\beta}}'_N {\tilde{\beta}}_N}) \times({ \beta' \beta})}$.

\begin{theorem}\label{th-cosca}
Under $(\mathcal{A}_{5\to 8})$,  for any sequence $(\tilde{\beta}_N)$ of principal eigenvectors of the sequence $(\Sigma^{-1} \hat{\Gamma}_N)$, we have
\[ \cos^2(\tilde{\beta}_N , \beta) \to 1 ~~\textrm{as}~N \to \infty.
 \]
\end{theorem}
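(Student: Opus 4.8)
The plan is to reduce the convergence of $\cos^2(\tilde\beta_N,\beta)$ to the convergence of the matrices $\hat\Gamma_N \to \hat\Gamma$, which is exactly the content of Lemma~\ref{lemma1} (invoked from the Appendix) combined with Theorem~\ref{label-pierce}. The first step is to recall the classical SIR argument: under $(\mathcal{A}_6)$ (elliptical symmetry of $X$) one has the key linearity property that $\esp[X-\mu \mid \hat Y]$ lies in the one-dimensional subspace $\Sigma\,\mathrm{span}(\beta)$, so that $\hat\Gamma = \var(\esp[X\mid\hat Y])$ has range contained in $\mathrm{span}(\Sigma\beta)$, hence $\Sigma^{-1}\hat\Gamma$ has range contained in $\mathrm{span}(\beta)$. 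Assumption $(\mathcal{A}_5)$ guarantees $\hat\Gamma\neq 0$, so $\Sigma^{-1}\hat\Gamma$ has exactly one nonzero eigenvalue, whose eigenspace is precisely $\mathrm{span}(\beta)$; thus $\beta$ is, up to scale, the unique principal eigenvector of $\Sigma^{-1}\hat\Gamma$, and the nonzero eigenvalue $\lambda>0$ is simple and isolated from $0$.

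The second step is a perturbation argument. By Lemma~\ref{lemma1}, $\|\hat\Gamma_N - \hat\Gamma\| \to 0$ as $N\to\infty$ (at rate $N^{-1/d}$, using $(\mathcal{A}_{7\text{--}9})$ and Theorem~\ref{label-pierce}), hence $\|\Sigma^{-1}\hat\Gamma_N - \Sigma^{-1}\hat\Gamma\| \to 0$ as well since $\Sigma^{-1}$ is a fixed bounded operator. Since $\lambda>0$ is a simple eigenvalue of $\Sigma^{-1}\hat\Gamma$ separated from the rest of the spectrum $\{0\}$ by a gap $\lambda$, standard eigenvalue/eigenvector perturbation theory (e.g. the Davis--Kahan $\sin\theta$ theorem, or the continuity of isolated eigenprojections as in Kato) gives: for $N$ large enough $\Sigma^{-1}\hat\Gamma_N$ has a unique eigenvalue $\lambda_N$ near $\lambda$ with $\lambda_N\to\lambda>0$, this $\lambda_N$ is the largest one (the others being near $0$), and the associated spectral projector $\Pi_N$ converges in norm to the projector $\Pi$ onto $\mathrm{span}(\beta)$.

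The third step translates projector convergence into the cosine statement. For any choice of principal eigenvector $\tilde\beta_N$ (unit-normalized, say), one has $\tilde\beta_N = \Pi_N\tilde\beta_N$, so $\|\tilde\beta_N - \Pi\tilde\beta_N\| = \|(\Pi_N-\Pi)\tilde\beta_N\| \leq \|\Pi_N-\Pi\| \to 0$; since $\Pi\tilde\beta_N$ is a scalar multiple of $\beta/|\beta|$, this forces $\mathrm{dist}(\tilde\beta_N,\mathrm{span}(\beta))\to 0$, which is exactly $\cos^2(\tilde\beta_N,\beta)\to 1$ by the definition given just before the theorem (the quantity $\cos^2$ being scale-invariant in $\tilde\beta_N$, the normalization is harmless). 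The one delicate point — the step I expect to be the real obstacle — is justifying that $\lambda_N$ is genuinely the \emph{principal} (largest) eigenvalue of $\Sigma^{-1}\hat\Gamma_N$ and not merely \emph{an} eigenvalue close to $\lambda$: this requires knowing that all other eigenvalues of $\Sigma^{-1}\hat\Gamma_N$ stay uniformly below $\lambda_N$ for large $N$, which follows from the spectral mapping/continuity argument since the rest of the spectrum of the limit is $\{0\}$ and $\lambda>0$, but it must be stated carefully because $\Sigma^{-1}\hat\Gamma_N$ need not be symmetric (it is similar, via $\Sigma^{1/2}$, to the symmetric matrix $\Sigma^{-1/2}\hat\Gamma_N\Sigma^{-1/2}$, so one should pass to that symmetrized version to legitimately invoke self-adjoint perturbation theory). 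Everything else is routine continuity.
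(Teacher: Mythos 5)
Your proposal is correct, and it reaches the result by a route genuinely different from the paper's. The paper does not argue directly with spectral projectors; instead, in the proof of Theorem~\ref{label-estimation1} it constructs a concrete sequence $(\beta_N)$ by decomposing a unit principal eigenvector $z_N$ of $\Sigma^{-1}\hat\Gamma_N$ as $z_N=\alpha_N\beta+z_N^\perp$ with $z_N^\perp\perp\beta$, uses the rank-one structure of $\hat\Gamma$ to derive $\hat\rho_N z_N-\alpha_N\hat\rho\beta=\Sigma^{-1}(\hat\Gamma_N-\hat\Gamma)z_N$, shows $\alpha_N\to 1/|\beta|$, sets $\beta_N=\tfrac{z_N}{\alpha_N}\tfrac{\hat\rho_N}{\hat\rho}$, and bounds $|\beta_N-\beta|$ by $\|\hat\Gamma_N-\hat\Gamma\|_\infty$. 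Theorem~\ref{th-cosca} is then a two-line corollary: any $\tilde\beta_N$ is collinear to $\beta_N$, and scale-invariance of $\cos^2$ plus $\hat\rho_N\to\hat\rho$, $|\beta_N|\to|\beta|$ gives the limit. Your argument instead goes through the isolated, simple nonzero eigenvalue of $\Sigma^{-1}\hat\Gamma$, invokes Davis--Kahan/Kato continuity of eigenprojections after symmetrizing to $\Sigma^{-1/2}\hat\Gamma_N\Sigma^{-1/2}$, and reads the cosine limit off the projector convergence. Both approaches rest on the same two pillars (the SIR rank-one structure from $(\mathcal{A}_5)$--$(\mathcal{A}_6)$, and Lemma~\ref{lemma1} plus Theorem~\ref{label-pierce} for $\|\hat\Gamma_N-\hat\Gamma\|\to 0$); your version is more of a black-box appeal to classical perturbation theory and is more careful about the non-symmetry of $\Sigma^{-1}\hat\Gamma_N$, while the paper's elementary decomposition buys an explicit error bound and a named convergent sequence that is reused in the constructive statement of Theorem~\ref{label-estimation1}. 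The one thing you flag as the ``real obstacle'' --- ensuring $\lambda_N$ is genuinely the largest eigenvalue --- is indeed handled correctly by the spectral-gap argument, and the paper silently uses the same fact when it asserts $\hat\rho_N\to\hat\rho>0$ for the principal eigenvalues.
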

The proof of this theorem is deferred in the Appendix A.3.
For the next theorem, we recall that for $x \in \mathbf{R}^d$, $\vert x \vert $ denotes the Euclidean norm on $ \mathbf{R}^d$.
\begin{theorem}\label{label-estimation1}
Under $(\mathcal{A}_{5\to 8})$, there exists a sequence of principal eigenvectors of $\Sigma^{-1}\hat{\Gamma}_N$ denoted by $(\beta_N)$ which converges to $\beta$. Indeed , there exist real constants  $C_1 , C_2 >0$ such that
\[ \forall N \geq C_2,~~ \big|\beta_N - \beta \big| \leq \frac{2 d }{C_1} \big\|\Sigma^{-1} \big\|_{\infty} \big\| X - \hat{X}^N \big\|_p \big\| X \big\|_q. \]
 Moreover, under  $(\mathcal{A}_{9})$, we control the rate of convergence by 
 \[	 |\beta_N - \beta| = \mathcal{O}\Big( \frac{1}{N^{1/d}}\Big).
\]
\end{theorem}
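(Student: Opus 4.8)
The plan is to reduce the statement to a matrix perturbation argument, leveraging the closeness of $\hat{\Gamma}_N$ to $\hat{\Gamma}$ from Lemma~\ref{lemma1}. First I would recall the classical SIR structure under $(\mathcal{A}_5)$--$(\mathcal{A}_6)$: since $X$ is elliptically symmetric and the model is single index, for every $\hat{y}$ in the grid $\gamma$ the centered conditional mean $\esp[X-\mu\,|\,\hat{Y}=\hat{y}]$ is collinear to $\Sigma\beta$. Summing the outer products, $\hat{\Gamma}=c\,\Sigma\beta\beta'\Sigma$ with $c=\sum_{\hat y}\prob(\hat{Y}=\hat{y})\,b_{\hat y}^2\geq 0$, and $c>0$ precisely by $(\mathcal{A}_5)$. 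Hence $A:=\Sigma^{-1}\hat{\Gamma}=c\,\beta(\Sigma\beta)'$ is rank one, with unique nonzero eigenvalue $\lambda_1=c\,\beta'\Sigma\beta>0$, right eigenvector $\beta$, and left eigenvector proportional to $\Sigma\beta$. In particular the gap separating $\lambda_1$ from the rest of the spectrum (which is $\{0\}$) equals $\lambda_1>0$; I will take this gap, up to a harmless numerical factor, to be the constant $C_1$.

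Next, set $A_N:=\Sigma^{-1}\hat{\Gamma}_N$. Lemma~\ref{lemma1} provides a constant with $\|\hat{\Gamma}_N-\hat{\Gamma}\|_\infty \leq (\mathrm{const})\,\|X-\hat{X}^N\|_p\,\|X\|_q$ (the Hölder pairing $\frac1p+\frac1q=1$ of $(\mathcal{A}_7)$ being exactly what produces the product of $\mathbf{L}^p$ and $\mathbf{L}^q$ norms), so that $\|A_N-A\|_\infty\leq\|\Sigma^{-1}\|_\infty\,\|\hat{\Gamma}_N-\hat{\Gamma}\|_\infty$, which tends to $0$ as $N\to\infty$ by Theorem~\ref{label-pierce} applied under $(\mathcal{A}_7)$--$(\mathcal{A}_8)$. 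I then choose $C_2$ large enough that $\|A_N-A\|_\infty\leq C_1/2$ for all $N\geq C_2$; by continuity of the spectrum the top eigenvalue of $A_N$ stays simple and isolated from the remaining eigenvalues by at least $C_1/2$, uniformly in $N\geq C_2$. Invoking a quantitative eigenvector perturbation bound (a Davis--Kahan type estimate, applied after symmetrising via $A=\Sigma^{-1/2}(\Sigma^{-1/2}\hat{\Gamma}\Sigma^{-1/2})\Sigma^{1/2}$, or directly a perturbation inequality for diagonalisable matrices), the principal eigenvector of $A_N$, rescaled to the norm of $\beta$ and with sign fixed by $\beta_N'\beta>0$, satisfies
\[
|\beta_N-\beta|\;\leq\;\frac{C}{C_1}\,\|A_N-A\|_\infty\;\leq\;\frac{2d}{C_1}\,\|\Sigma^{-1}\|_\infty\,\|X-\hat{X}^N\|_p\,\|X\|_q,
\]
where the factor $d$ absorbs the universal constant in the perturbation inequality together with the conversions between the spectral and $\|\cdot\|_\infty$ matrix norms. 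Since the right-hand side goes to $0$, this simultaneously yields the convergence $\beta_N\to\beta$ and the explicit bound.

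For the rate statement, I would add assumption $(\mathcal{A}_9)$ and apply Theorem~\ref{label-pierce} once more: $\|X-\hat{X}^N\|_p^p\leq N^{-p/d}\big(D_1\|X\|_{p+\delta}^{p+\delta}+D_2\big)$, i.e. $\|X-\hat{X}^N\|_p=\mathcal{O}(N^{-1/d})$, while $\|X\|_q$ is finite by $(\mathcal{A}_7)$ and $\|\Sigma^{-1}\|_\infty$, $C_1$, $d$ are fixed. Substituting into the displayed bound gives $|\beta_N-\beta|=\mathcal{O}(N^{-1/d})$.

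I expect the main obstacle to be the eigenvector perturbation step. Since $A$ and $A_N$ are not symmetric, the clean symmetric Davis--Kahan statement does not apply verbatim; one must either symmetrise through $\Sigma^{\pm1/2}$ and track the extra $\|\Sigma^{\pm1/2}\|$ factors, or use a perturbation bound valid for general diagonalisable matrices with a controlled eigenvector basis. One also needs to check that the normalisation and sign of $\beta_N$ can be chosen consistently for all large $N$ — this is where simplicity of $\lambda_1$ (hence uniqueness of the eigendirection up to sign) is indispensable — and that the implicit constants are uniform in $N\geq C_2$, which is guaranteed by the uniform lower bound $C_1/2$ on the spectral gap.
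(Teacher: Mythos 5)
Your overall scaffold — bound $\|\hat{\Gamma}_N-\hat{\Gamma}\|_\infty$ via Lemma~\ref{lemma1}, push through $\Sigma^{-1}$, then translate a matrix perturbation into an eigenvector bound and finish with Theorem~\ref{label-pierce} — is the same as the paper's. Where you genuinely diverge is in the eigenvector step. The paper does \emph{not} invoke any Davis--Kahan-type machinery. It exploits the rank-one structure head-on: write the unit principal eigenvector $z_N$ of $\Sigma^{-1}\hat\Gamma_N$ as $z_N=\alpha_N\beta+z_N^\perp$ with $z_N^\perp\perp\beta$ and $\alpha_N\geq 0$, note that $z_N^\perp$ lies in the $(d-1)$-dimensional kernel of $\Sigma^{-1}\hat\Gamma$, and obtain the identity
\[
\hat\rho_N z_N-\alpha_N\hat\rho\,\beta=\Sigma^{-1}(\hat\Gamma_N-\hat\Gamma)z_N .
\]
From this one reads off that $\alpha_N\to 1/|\beta|$ and, setting $\beta_N=\frac{z_N}{\alpha_N}\frac{\hat\rho_N}{\hat\rho}$, gets the bound directly with $1/C$ where $C=\min_{N\geq N_1}\alpha_N$. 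This is purely algebraic, needs no symmetrisation through $\Sigma^{\pm1/2}$, and immediately hands you the specific sequence $(\beta_N)$ with a normalisation chosen to make the inequality tight. Your route would also work, but you yourself flag the nontrivial issues (non-symmetric matrices, tracking the extra $\|\Sigma^{\pm 1/2}\|$ factors, fixing sign and scale of $\beta_N$); the paper's argument side-steps all of them, which is precisely what makes it preferable here.

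One genuine misstep worth correcting: the factor $d$ in the displayed bound does \emph{not} come from ``absorbing the universal constant in the perturbation inequality and norm conversions''. It is exactly the $2d$ already present in Lemma~\ref{lemma1}, i.e.\ $\|\hat\Gamma_N-\hat\Gamma\|_\infty\leq 2d\,\|X-\hat X^N\|_p\|X\|_q$ (the $d$ appears when passing from entrywise bounds of size $2\|X-\hat X^N\|_p\|X\|_q$ to the row-sum norm $\|\cdot\|_\infty$). You wrote Lemma~\ref{lemma1} with an anonymous ``const'' and then reintroduced $d$ on the wrong side of the ledger. If you invoke a Davis--Kahan-type bound on top, any constant it carries would have to sit \emph{in addition to} the $2d$ from the lemma and get folded into $C_1$, not into the $d$. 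Also be careful that Davis--Kahan controls the angle (or $\sin\Theta$) between eigendirections; converting that into a Euclidean bound $|\beta_N-\beta|$ requires committing to a particular normalisation of $\beta_N$, which in the paper is made explicit by $\beta_N=\frac{z_N}{\alpha_N}\frac{\hat\rho_N}{\hat\rho}$ rather than by merely rescaling to $|\beta_N|=|\beta|$.
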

The proof of this theorem is deferred in the Appendix A.2.

\section{Forecasting method}
\label{forecast}

Now we mix the previous estimations to tackle the forecast of $Y$ in the semiparametric regression model (\ref{model}). For a  given $\beta_N$  defined in Section~\ref{sir},  the model 
\[
 Y=f(\beta_N' X , \epsilon)
\]
is a good approximation of the initial one. So, the method to forecast $Y$ in (\ref{pmodel}) when $U=\beta_N' X$ will give a good forecast of $Y$ in  model (\ref{model}). For the asymptotics, we have to consider two parameters: the number $N$ of quantizers of $X$ which indexes $\beta_N$ (see Section \ref{sir}) and the number $m$ of quantizers of $U=\beta_N' X $ and $Y$ (see Section \ref{secsimple}). We will show the forecast error tends to $0$ as  $m$ and $N$ simultaneously tend to infinity.

 Using Theorem~\ref{label-estimation1}, there exist some $\Delta_0 , C_0 >0$ such that for all $N \geq C_0$
\begin{equation} \label{etoile}
|\beta_N - \beta| \leq \Delta_0 \| X - \hat{X}^N \|_p.
\end{equation}
We need some additional assumptions to get our asymptotic result. Let us assume that $f$ is Lipschitz and optimal quantization in $\mathbf{L}^p$-norm for $Y$ is possible:
\begin{displaymath}
\begin{array}{ll}
(\mathcal{A}_{10}) & \exists~ [f]_{Lip}>0~ \textrm{s.t.$~\forall u, v \in \mathbf{R}$,}~~ \| f(u,\epsilon) - f(v,\epsilon)\|_p \leq [f]_{Lip} |u-v|.\\
(\mathcal{A}_{11})		&	Y \in \mathbf{L}^{p+\delta}. \\
(\mathcal{A}_{12}) 		& 	\textrm{The distribution of $Y$ does not charge hyperplanes.}
\end{array}
\end{displaymath}
We forecast $Y$ given $X$ by the random variable $\hat{Y}^c$ such that $\big( \widehat{ \beta'_N X}^m  , \hat{Y}^c \big)$ is a stopped Markov chain with the same transition matrix as $\big( \widehat{ \beta'_N X}^m  , \hat{Y}^m \big)$ where $\widehat{ \beta'_N X}^m$ and $\hat{Y}^m$ are optimal (in $\mathbf{L}^p$-norm) discrete approximations of $\beta_N' X$ and $Y$ with $m$ quantizers.
\begin{theorem} \label{label-forecast1}
Under $(\mathcal{A}_{5 \to {12}})$, for all Lipschitz function $\phi$, there exist three real numbers $A_1,A_2, A_3$, a sequence $(g_N)$ which admits a strictly positive limit and two integers $\overline{m}$ and $\overline{N}$ such that for all $m \geq \overline{m}$ and $N \geq \overline{N}$, we have
$$
	\Big\| \esp\big[ \phi(Y) | \beta'X \big] - \esp\Big[ \phi  ( \hat{Y}^m  ) | \widehat{ \beta'_N X}^m \Big] \Big\|_1     \leq \frac{A_1}{N^{1/d}} + \frac{A_2}{m} g_N+ \frac{A_3}{m}.
$$
\end{theorem}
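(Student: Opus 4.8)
The plan is to decompose the quantity $\big\| \esp[\phi(Y)|\beta'X] - \esp[\phi(\hat{Y}^m)|\widehat{\beta'_N X}^m]\big\|_1$ via the triangle inequality, inserting the intermediate term $\esp[\phi(Y)|\beta_N'X]$. This gives two pieces: a \emph{SIR error} $\big\|\esp[\phi(Y)|\beta'X] - \esp[\phi(Y)|\beta_N'X]\big\|_1$ coming from replacing the true EDR direction $\beta$ by its quantized estimate $\beta_N$, and a \emph{forecasting error} $\big\|\esp[\phi(Y)|\beta_N'X] - \esp[\phi(\hat{Y}^m)|\widehat{\beta_N'X}^m]\big\|_1$ coming from quantizing $U=\beta_N'X$ and $Y$ with $m$ points. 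The second piece is exactly the setting of Theorem~\ref{label-pred1} applied to the nonparametric model $Y=f(\beta_N'X,\epsilon)$, i.e. with $U=\beta_N'X$ and $\tilde f=f(\cdot,\cdot)$; assumptions $(\mathcal{A}_{10\to12})$ guarantee its hypotheses $(\mathcal{A}_{1\to3})$ hold (for $U=\beta_N'X$ one uses $(\mathcal{A}_7)$ and Cauchy--Schwarz to get $U\in\mathbf{L}^{p+\delta}$, and $(\mathcal{A}_{10})$ transfers to a Lipschitz constant $[f]_{Lip}|\beta_N|$ in the direction $u$). So Theorem~\ref{label-pred1} bounds it by $2[\phi]_{Lip}[f]_{Lip}|\beta_N|\,\|U-\hat U^m\|_p + [\phi]_{Lip}\|Y-\hat Y^m\|_p$, and Theorem~\ref{label-pierce} turns both quantization errors into $\mathcal{O}(m^{-1})$ terms (here the exponent is $1/\dim(U)=1$ since $U=\beta_N'X$ is real-valued).

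The first step in detail: I would bound the SIR error directly. Since $Y=f(\beta'X,\epsilon)$ we have $\esp[\phi(Y)|\beta'X]=\esp[\phi(f(\beta'X,\epsilon))|\beta'X]=:\Psi(\beta'X)$ where $\Psi(t)=\esp[\phi(f(t,\epsilon))]$ (using independence of $\epsilon$ and $X$); by $(\mathcal{A}_{10})$ and the Lipschitz property of $\phi$, $\Psi$ is Lipschitz with constant $[\phi]_{Lip}[f]_{Lip}$. However $\esp[\phi(Y)|\beta_N'X]$ is \emph{not} of the form $\Psi(\beta_N'X)$ in general, because conditioning on $\beta_N'X$ is not the same as conditioning on $\beta'X$. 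The cleanest route is to write both as conditional expectations and use the contraction property of conditional expectation in $\mathbf{L}^1$: $\big\|\esp[\phi(Y)|\beta'X]-\esp[\phi(Y)|\beta_N'X]\big\|_1 \le \big\|\esp[\phi(Y)|\beta'X,\beta_N'X]-\phi(Y)\big\|_1 + \dots$ — this is messy. A better idea: control it by $2\big\|\Psi(\beta'X)-\Psi(\beta_N'X)\big\|_1$-type terms, i.e. since $\esp[\phi(Y)|\beta_N'X]=\esp[\Psi(\beta'X)|\beta_N'X]$ (by the tower property, as $\phi(Y)$ integrated against $\epsilon$ gives $\Psi(\beta'X)$ which is $\sigma(X)$-measurable), we get $\big\|\esp[\Psi(\beta'X)|\beta'X]-\esp[\Psi(\beta'X)|\beta_N'X]\big\|_1$. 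Both conditional expectations are $\mathbf{L}^1$-projections onto sub-$\sigma$-algebras; bounding their difference by $2\,\|\Psi(\beta'X)-\Psi(\beta_N'X)\|_1 + \big\|\esp[\Psi(\beta_N'X)|\beta'X]-\esp[\Psi(\beta_N'X)|\beta_N'X]\big\|_1$ plus a symmetric term, iterating, one reduces everything to $\|\Psi(\beta'X)-\Psi(\beta_N'X)\|_1 \le [\phi]_{Lip}[f]_{Lip}\,\esp|\,(\beta-\beta_N)'X| \le [\phi]_{Lip}[f]_{Lip}\,|\beta-\beta_N|\,\|X\|_1$. Finally \eqref{etoile} gives $|\beta-\beta_N|\le \Delta_0\|X-\hat X^N\|_p$ for $N\ge C_0$, and Theorem~\ref{label-pierce} gives $\|X-\hat X^N\|_p = \mathcal{O}(N^{-1/d})$ under $(\mathcal{A}_9)$.

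Collecting the pieces: the SIR error is $\le A_1/N^{1/d}$ for a suitable $A_1$ (for $N$ large, say $N\ge\overline N\ge C_0$); the forecasting error contributes $2[\phi]_{Lip}[f]_{Lip}|\beta_N|\,\|U-\hat U^m\|_p = \frac{A_2}{m}g_N$ where $g_N:=|\beta_N|$ — note $g_N\to|\beta|>0$ by Theorem~\ref{label-estimation1}, which is the stated sequence admitting a strictly positive limit — plus $[\phi]_{Lip}\|Y-\hat Y^m\|_p=A_3/m$; the $\overline m$ is the threshold $D_3$ from Theorem~\ref{label-pierce} applied to $U$ and $Y$. This yields exactly the claimed bound $\frac{A_1}{N^{1/d}}+\frac{A_2}{m}g_N+\frac{A_3}{m}$. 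The main obstacle I anticipate is the first step: making rigorous the comparison of the two conditional expectations $\esp[\phi(Y)|\beta'X]$ and $\esp[\phi(Y)|\beta_N'X]$ when $\beta$ and $\beta_N$ are genuinely different directions — the naive bound ``$\le$ Lipschitz constant times $|\beta-\beta_N|\,\|X\|$'' is true for the \emph{function} $\Psi$ but transferring it through the change of conditioning $\sigma$-algebra requires care; one likely needs a smoothness/regularity assumption on the conditional law implicitly (or an argument that the family $\{\esp[\,\cdot\,|\beta_N'X]\}$ is suitably equicontinuous), and this is where the proof in Appendix~A will have to do real work rather than routine estimates.
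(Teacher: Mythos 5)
There is a genuine gap in your plan, and it sits in the piece you thought was routine, not the one you flagged. You propose to bound the forecasting error $\big\|\esp[\phi(Y)\mid\beta_N'X]-\esp[\phi(\hat Y^m)\mid\widehat{\beta_N'X}^m]\big\|_1$ by applying Theorem~\ref{label-pred1} to the pair $(U,Y)=(\beta_N'X,Y)$ with $\tilde f=f$. But Theorem~\ref{label-pred1} requires the model $Y=\tilde f(U,\epsilon)$, and here $Y=f(\beta'X,\epsilon)$, \emph{not} $f(\beta_N'X,\epsilon)$. The pair $(\beta_N'X,Y)$ does not satisfy the hypotheses of Theorem~\ref{label-pred1}, so that theorem cannot be invoked directly; the Lipschitz/contraction chain in its proof relies on $Y$ being a Lipschitz function of $U$ (given $\epsilon$), which fails when $U=\beta_N'X$ while $Y$ is driven by $\beta'X$. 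This is precisely why the paper introduces the auxiliary variable $Y_N:=f(\beta_N'X,\epsilon)$ together with its own $m$-quantizer $\widehat{Y_N}^m$: the pair $(\beta_N'X,Y_N)$ genuinely satisfies the regression model, so Theorem~\ref{label-pred1} applies to it, and the remaining mismatch is paid by the extra, easily controlled terms $\|\phi(Y)-\phi(Y_N)\|_p$ and $\|\phi(Y_N)-\phi(\widehat{Y_N}^m)\|_p$ that appear in the six-term telescoping decomposition of Lemma~\ref{label-lemma1-forecast1}. Without $Y_N$ and $\widehat{Y_N}^m$, your two-piece split does not close.

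On the piece you did flag as problematic, the SIR error $\big\|\esp[\phi(Y)\mid\beta'X]-\esp[\phi(Y)\mid\beta_N'X]\big\|_1$, you correctly sense that the naive Lipschitz bound does not transfer across the change of conditioning $\sigma$-algebra, but the resolution is simpler and more specific than the equicontinuity machinery you gesture at. The paper uses exactly the model assumption: $\esp[\phi(Y)\mid X]=\esp[\phi(Y)\mid\beta'X]=F(V)$, so $F(V_N)-\esp[\phi(Y)\mid\beta_N'X]=\esp\big[F(V_N)-\esp[\phi(Y)\mid X]\,\big|\,\beta_N'X\big]=\esp\big[F(V_N)-F(V)\mid\beta_N'X\big]$, and the $\mathbf{L}^p$-contraction of conditional expectation then reduces it to $\|F(V_N)-F(V)\|_p\le[\phi]_{Lip}[f]_{Lip}\|X\|_p|\beta-\beta_N|$. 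No iteration or implicit regularity assumption is needed. Finally, your identification $g_N=|\beta_N|$ is close in spirit but not what falls out of the paper's decomposition: $g_N$ there collects the $N$-dependence of $\|Y_N\|_{p+\delta}$ and $\|\beta_N'X\|_{p+\delta}$ inside the Pierce bounds for the $m$-quantization errors of $Y_N$ and $\beta_N'X$, and its strictly positive limit comes from $\beta_N\to\beta$ rather than being $|\beta_N|$ itself.
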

The proof of this theorem is deferred in the Appendix A.4.
This theorem yields that the forecast error decreases to $0$ as $N$ and $m$ go to infinity.

\section{Simulation study}
\label{simu}

The aim of this simulation study is twofold. 
First, we are only interested in the estimation of the regression slope parameter $\beta$, next we focus on  forecasting $Y$ given $X=x$ in the semiparametric regression model (\ref{model}). 
In the first part, since only the direction of $\beta$ is identifiable in (\ref{model}),  we evaluate the quality of our estimator with the square cosine of the angle between the true direction $\beta$ and its estimates. The closer this square cosine is to one, the better is the estimation. 
For forecasting part of this study, we will compare the true conditional expectation and the true conditional variance of $Y$ given $X$ in model (\ref{model}) to their estimations via sliced inverse regression and quantization. For this we need to ensure that $\beta$ and its estimates have the same norm and sign before quantizing the estimated index.

\medskip

In this simulation study, we consider the following three models:
$$\begin{array}{llll}
({\mathcal{M}}_1):~~&Y &=&(\beta'X)^3 + \epsilon, \\
({\mathcal{M}}_2):~~&Y &=& (\beta'X)^3 + \beta'X \epsilon,  \\
({\mathcal{M}}_3):~~&Y &=& (\beta'X)^2 \exp\Big( \frac{\beta'X}{\theta} \Big) + \epsilon, 
\end{array}$$
where $X$ follows a $d$-dimensional normal distribution ${\mathcal{N}}(0,I_d)$ and $\epsilon$ is standard normally distributed. The error term $\epsilon$ is independent of the covariable $X$.  In the first (resp. second) part of the simulation study, the dimension of $X$ is $d=10$ (resp. $d=4$) with $\beta = (1 , -1 , 0, \dots , 0)'$. The first and third models are homoscedastic while the second one is heteroscedastic. We have introduced the parameter $\theta$ in model $({\mathcal{M}}_3)$ in order to point out the efficiency of our method even if the model is symmetric dependent. Indeed, $\theta$ controls the symmetric dependency between the index $\beta'X$ and the variable of interest $Y$. When $\theta=1$, there is none symmetric dependency and SIR works well. Symmetric dependency appears as $\theta$ increases, it is moderate when $\theta=5$ and strong when $\theta=10$. In such cases, classical SIR fails to estimate the direction of $\beta$, this is a known pathological situation for SIR. Our proposed approach will appear  more robust in presence of symmetric dependent  model.

\subsection{Estimation of the subspace spanned by $\beta$}

Assume we have a sample $\{(X_i , Y_i),~ i=1,\dots,n\}$ of random variables generated by one of the three previous regression models. 
First the covariance matrix $\Sigma$ of $X$ is estimated by the empirical one of the $X_i$'s and is denoted by $S$. 
Then $X$ and $Y$ are quantized in $\mathbf{L}^2$-norm by the usual algorithm given for example by Pag\`es and Printems (2003). 
We get the corresponding two optimal approximations $\hat{X}^N$ and $\hat{Y}^m$. Finally the covariance matrix $\hat{\Gamma}_N=\var(\mathbf{E}[\hat{X}^N | \hat{Y}^m ])$ is calculated.

We use two sample sizes $n=300$ and $n=1000$.
Since the dimension of $X$ is $d=10$, the sample size $n$ is rather small for the stochastic quantization algorithm in order to get optimal quantization grids; thus we may get some not so optimal quantization grids. 
To overcome this failure, we will use the idea of the pooled slicing approach introduced by Saracco (2001).  Let us quantize $B$ times the variables $X$ and $Y$  with the same sample. So we get $B$ estimations $T_1, \dots, T_B$ of $\hat{\Gamma}_N$ and we work with the mean
$$\overline{T} = \frac{1}{B} \sum_{b=1}^B T_b.$$
Note that since the principal eigenvector of each $\Sigma^{-1}T_b$ is collinear to $\beta$, the principal eigenvector of $\Sigma^{-1}\overline{T}$ is also collinear to $\beta$.
Let us now denote by $\hat{\beta}_N$ a principal eigenvector of $S^{-1}\overline{T}$. We will use this vector $\hat{\beta}_N$ as our estimate of the direction of $\beta$.

In the displayed results, we take $B=5$ quantization grids. The numbers of quantizers for $Y$ and $X$ are respectively  $m=5$ and $N\in\{20,30,50,100,200\}$. 

We generate 100 samples for each model and each sample size.
For each simulated sample, we calculate the estimate $\hat{\beta}_N$ and the corresponding quality measure $\cos^2(\beta, \hat{\beta}_N)$.
 
We represent in Figure \ref{figure1} (resp. Figure \ref{figure2}) the boxplots of the squared cosines according to the number $N$ of quantizers for $X$ for the various models when $n=300$ (resp. $n=1000$). We also compare our estimation method with the usual SIR method when the number $H$ of slices is equal to 5 (that is equal to the number $m$ of quantizers we used for $Y$ in our estimation process).
Clearly we observe that when the number $N$ of quantizers increases the quality of the estimator increases too for all the models. Moreover not surprisingly when the sample size $n$ becomes bigger, the squared cosines increase toward one. One can see that the classical SIR approach works better than our proposed estimation method for models $({\mathcal{M}}_1)$,  $({\mathcal{M}}_2)$ and  $({\mathcal{M}}_3)$ with $\theta=1$, that is when the underlying model favors SIR. 
We can however remark that the numerical performances of our estimator are close (resp. very close) to those obtained with classical SIR when $N=200$ and $n=300$ (resp. $n=1000$).
When a symmetric dependency appears in the regression model, our approach outperforms classical SIR: this point  is more particularly obvious when $\theta=5$ or 10 for model $({\mathcal{M}}_3)$ when $n=1000$.
Since in practice with a real dataset it is not possible to determine if the underlying regression model is symmetric dependent or not, we can expect that the use of our estimation method will provide a more robust estimation of the direction of $\beta$ than SIR.

\subsection{Forecasting}

In this part, we consider samples of size $n = 10 000$ generated from the previous semiparametric regression models where the dimension of the covariable $X$ is equal to $d=4$. 

For each simulated sample, we first estimate $\beta$ by the procedure described in the previous subsection. 
However we use only one quantization ($B=1$) for the two variables $X$ and $Y$  because we work here with enough data for the quantization algorithm in order to get an optimal quantization grid.
To do this, we use $N=200$ quantizers for $X$ and $m=5$ quantizers for $Y$. Here we need to get an estimator $\beta_N$ of $\beta$ and not only of its direction. Since we know the true slope parameter $\beta$ in simulation,  we can assume that the sign and the norm of $\beta$ are known. 

Finally, in order to estimate the conditional distribution of $Y$ given $X$, we quantize $\beta_N' X$  with $m=100$ quantizers and we also use  $m=100$ quantizers for the quantization of $Y$ in this step. Then we get the law of $\hat{Y}^m$ given $\widehat{\beta_N'X}^m$. 
Now we can estimate the conditional expectation and the conditional variance of $Y$ given $X$ by the ones of $\hat{Y}^m$ given $\widehat{\beta_N'X}^m$.

In Table \ref{tab:x1} (resp. \ref{tab:x2}), we present some estimation results of  the conditional expectation and the conditional variance of $Y$ given $X= (0.5 , -0.5,1,0)'$ (resp. given $X=(-1/3 , 0.5,1,1)'$).
For each model, we compare our estimations with the true values of the conditional expectation and the conditional variance, and we evaluate the corresponding relative error. One can see that the estimated values obtained with our proposed method are very close to the true ones for the conditional expectation as well as for the conditional variance whatever the model is. The relative errors (in absolute value) are lower than 16\% for most of the models.

Finally, we generate ten values $x_j$ from a uniform law on $[-2,2]^4$. For each value $x_j$  and for each model,  we estimate 
 the conditional expectation and the conditional variance of $Y$ given $X=x_j$. Figure~\ref{figure3} gives the two boxplot of the relative error of these conditional moments for model $({\mathcal{M}}_3)$ with $\theta=5$. One can observe that our estimation procedure provides reasonable values for these relative errors. We obtain very similar results (not given here) for the other models.

In this subsection, we only focus on the first two conditional moments of $Y$ given $X$. However, with the proposed approach, it is straightforward to make forecasting by using the conditional median for instance. In the same spirit, it is possible to estimate conditional quantiles (5\% and 95\% conditional quantiles) in order to provide the 90\% predictive interval.

\section{Concluding remarks}
\label{conclu}
We have presented a method using the probabilistic tool of quantization in order to tackle both the problem of the estimation of the EDR direction and the forecasting of $Y$ given $X$ by its conditional law in the semiparametric regression model (\ref{model}). We proved the convergence of our estimators and gave their rate of convergence. To our knowledge, using of optimal quantization in nonparametric or semiparametric statistics is new and forecasting the variable of interest by its conditional distribution with this kind of appraoch is original. The simulation studies give good results for large samples ($n=1000$) and in this case, the performance is comparable to the one of the SIR method. It is even better when a symmetric dependency appears in the regression model. In practice with a real dataset, we do not know if the underlying regression model is symmetric dependent or not, so we can expect that using of our estimation method will provide a more robust estimation of the direction of $\beta$ than SIR.

\paragraph{Acknowledgement} Professor Fran\c cois Dufour gave us the main idea of this paper that is to use together quantization and SIR methods for the problem of dimension reduction and forecasting in model (\ref{model}). We are most grateful to him for this  suggestion.

\begin{landscape}
\begin{figure}[!htbp]
\centering
\begin{tabular}{ccc}
   \includegraphics[width=7cm, height=6cm]{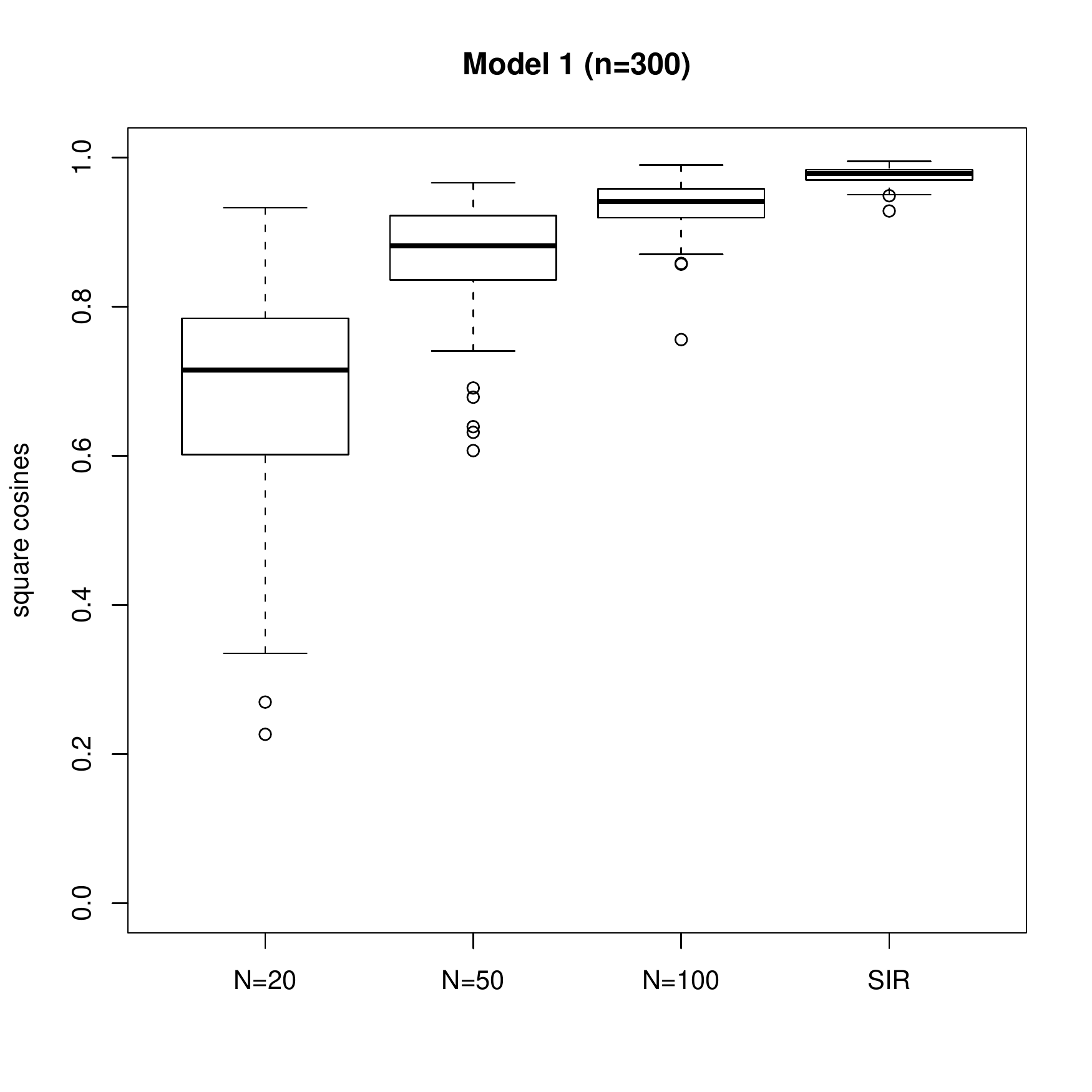} &
   \includegraphics[width=7cm, height=6cm]{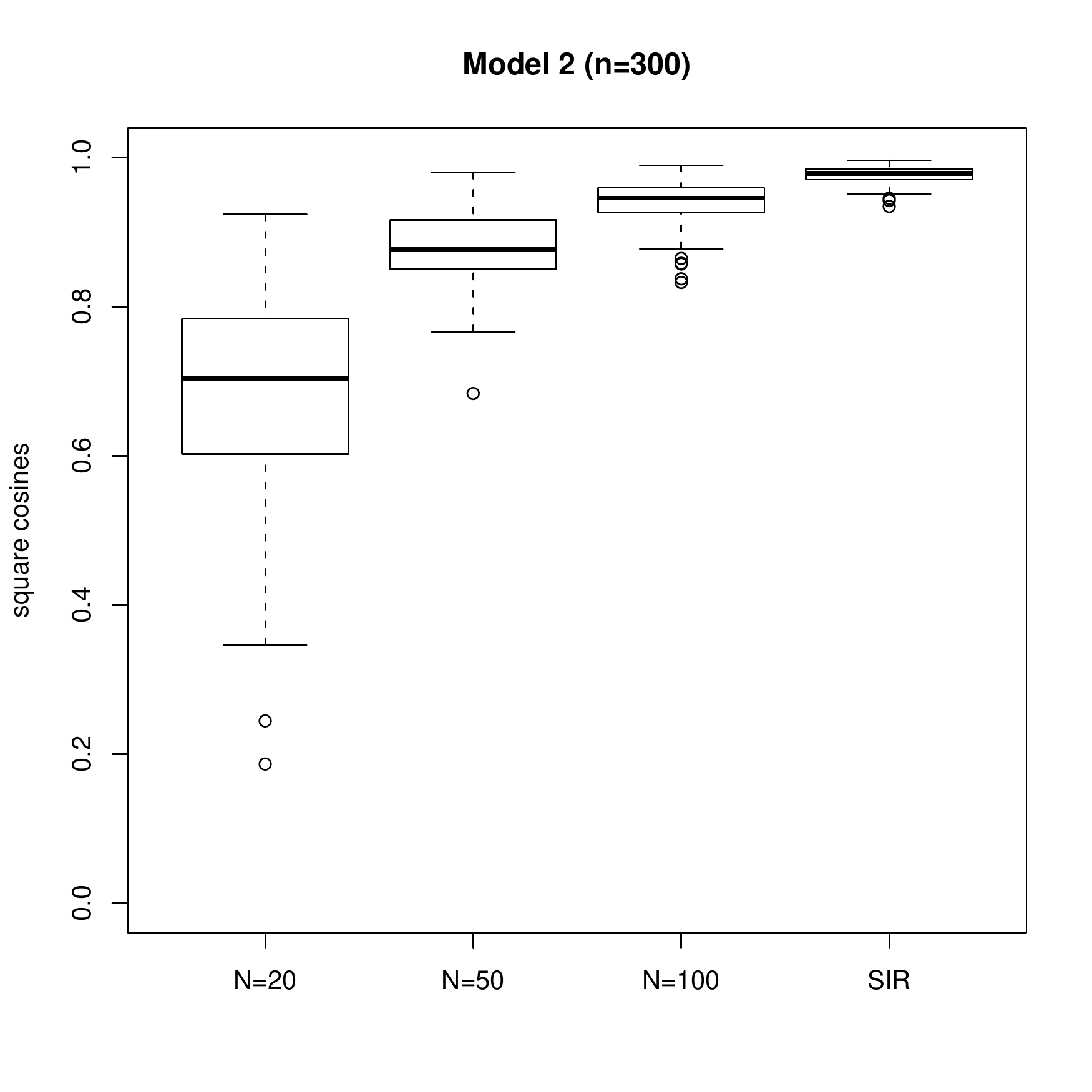} &
   \includegraphics[width=7cm, height=6cm]{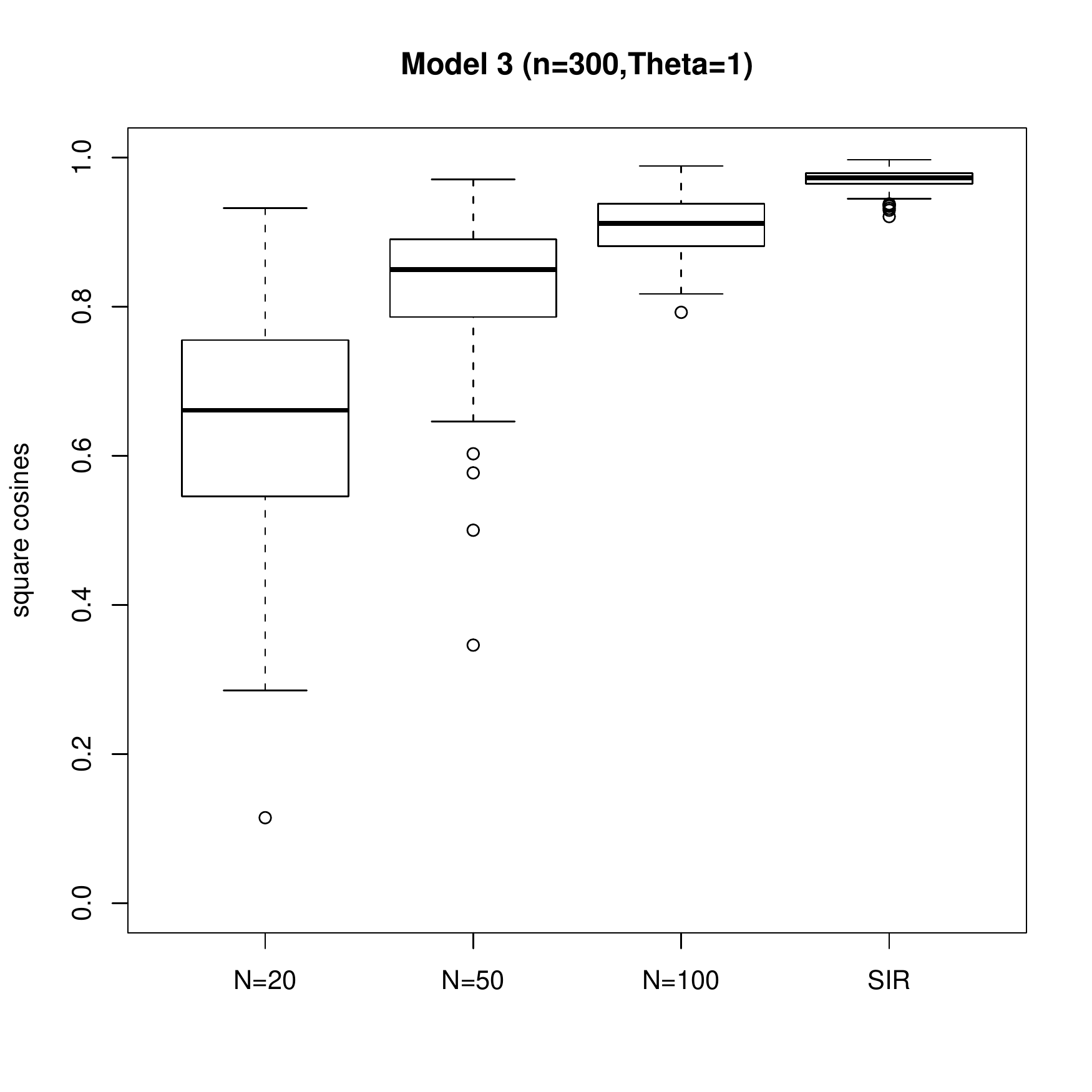} \\
   \includegraphics[width=7cm, height=6cm]{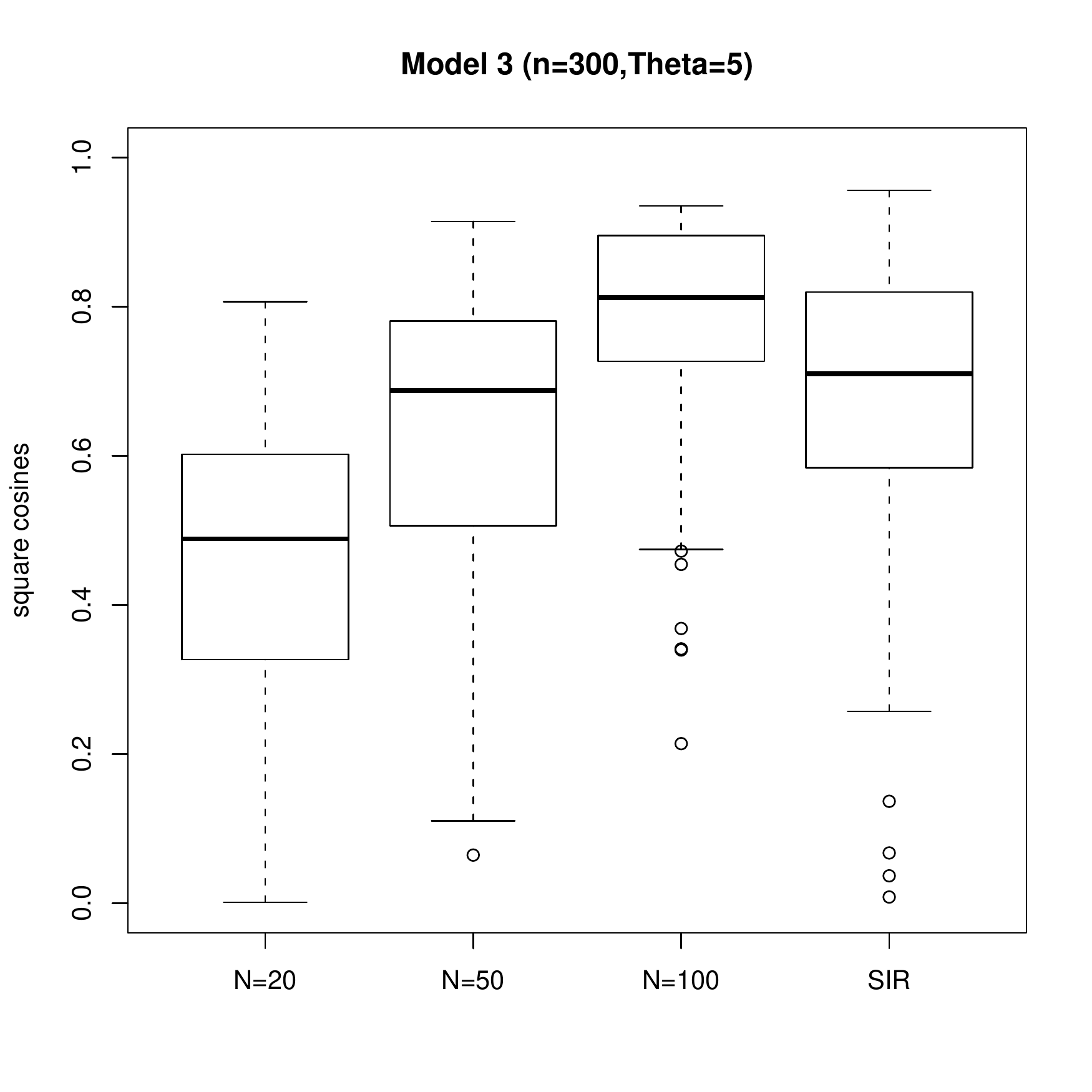} &
   \includegraphics[width=7cm, height=6cm]{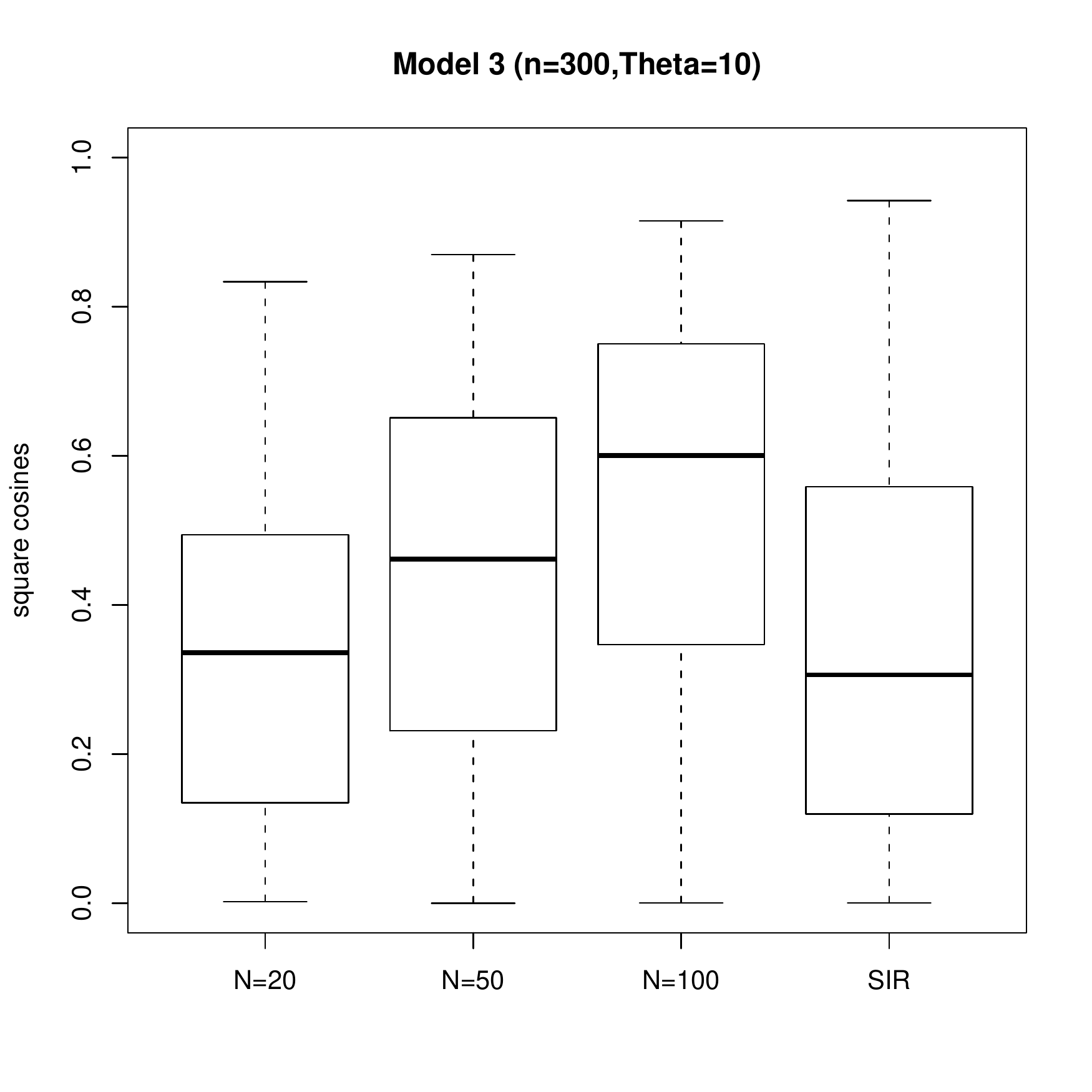}
\end{tabular}
\caption{Boxplots of the square cosines in the estimation of the regression parameter $\beta$ with $n=300$} \label{figure1} 
\end{figure}
\end{landscape}

\begin{landscape}
\begin{figure}[!htbp]\centering
\begin{tabular}{ccc}
   \includegraphics[width=7cm, height=6cm]{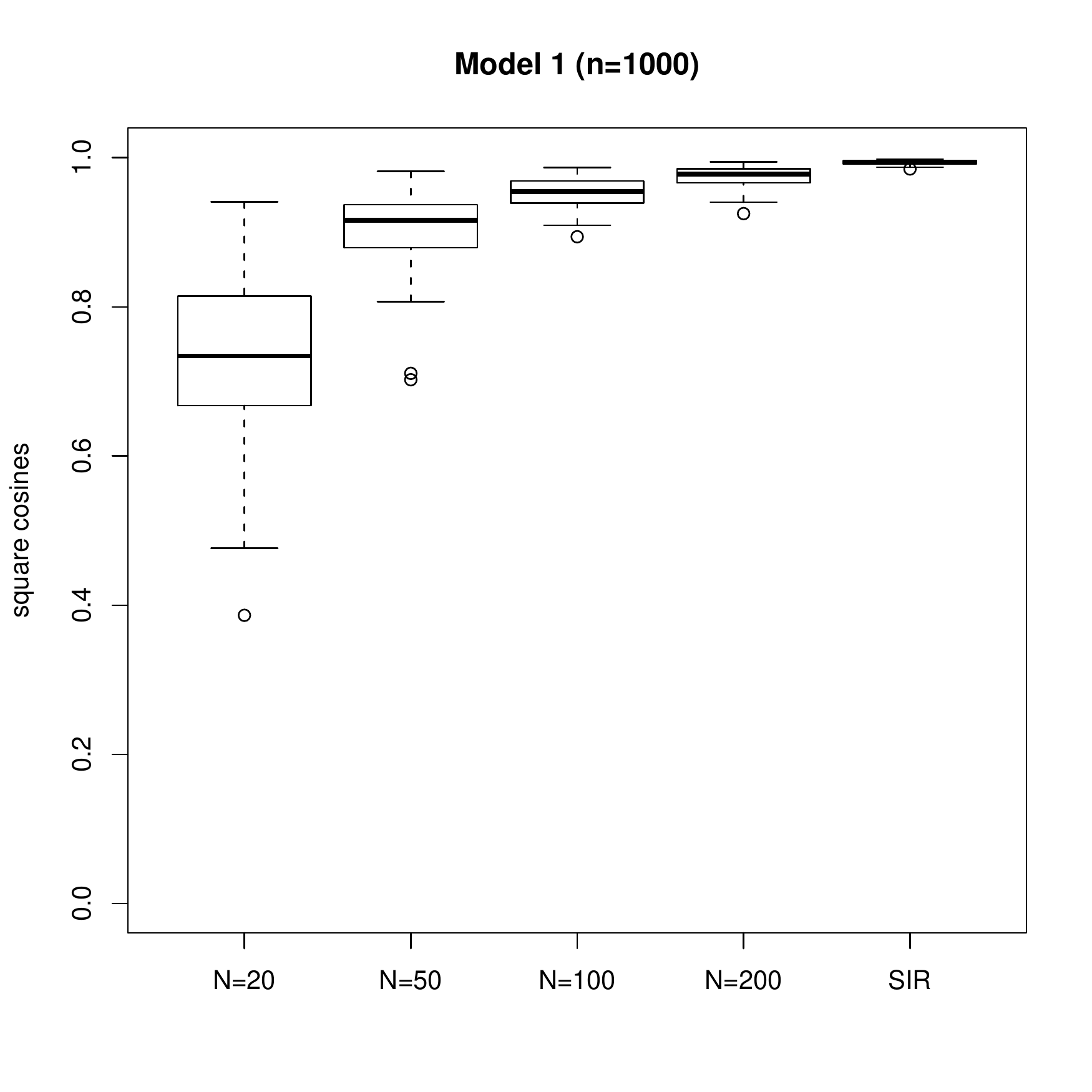} &
   \includegraphics[width=7cm, height=6cm]{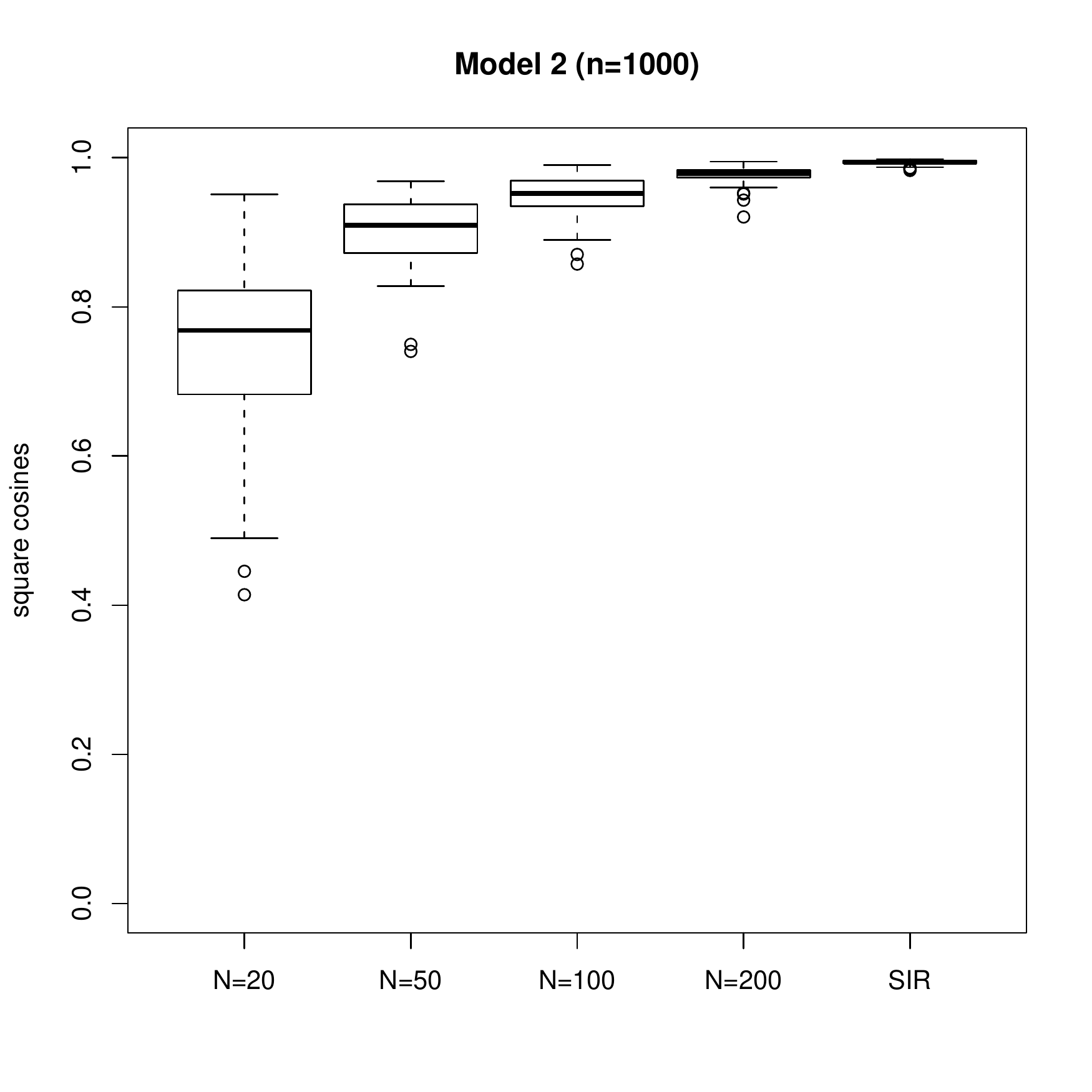} &
   \includegraphics[width=7cm, height=6cm]{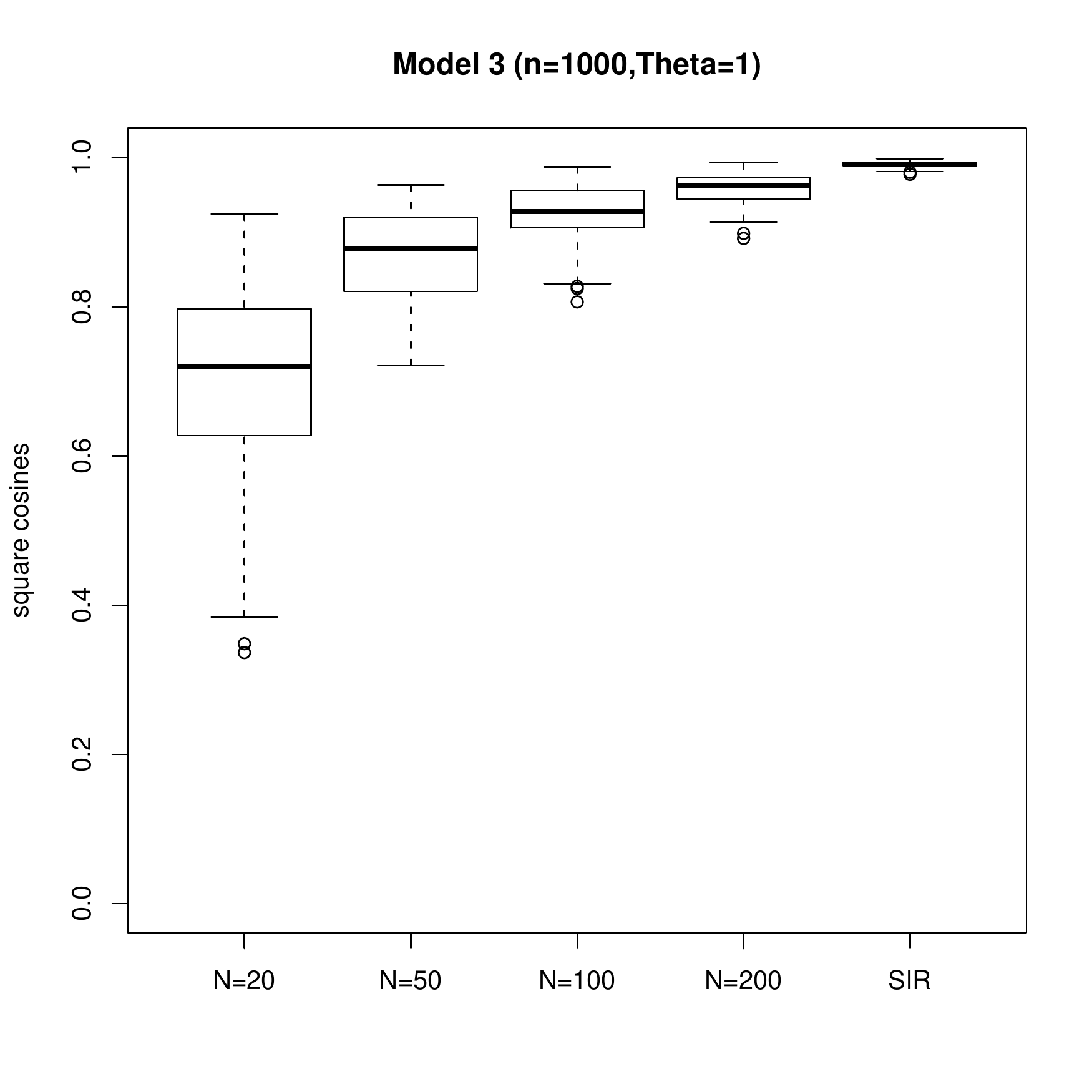} \\
   \includegraphics[width=7cm, height=6cm]{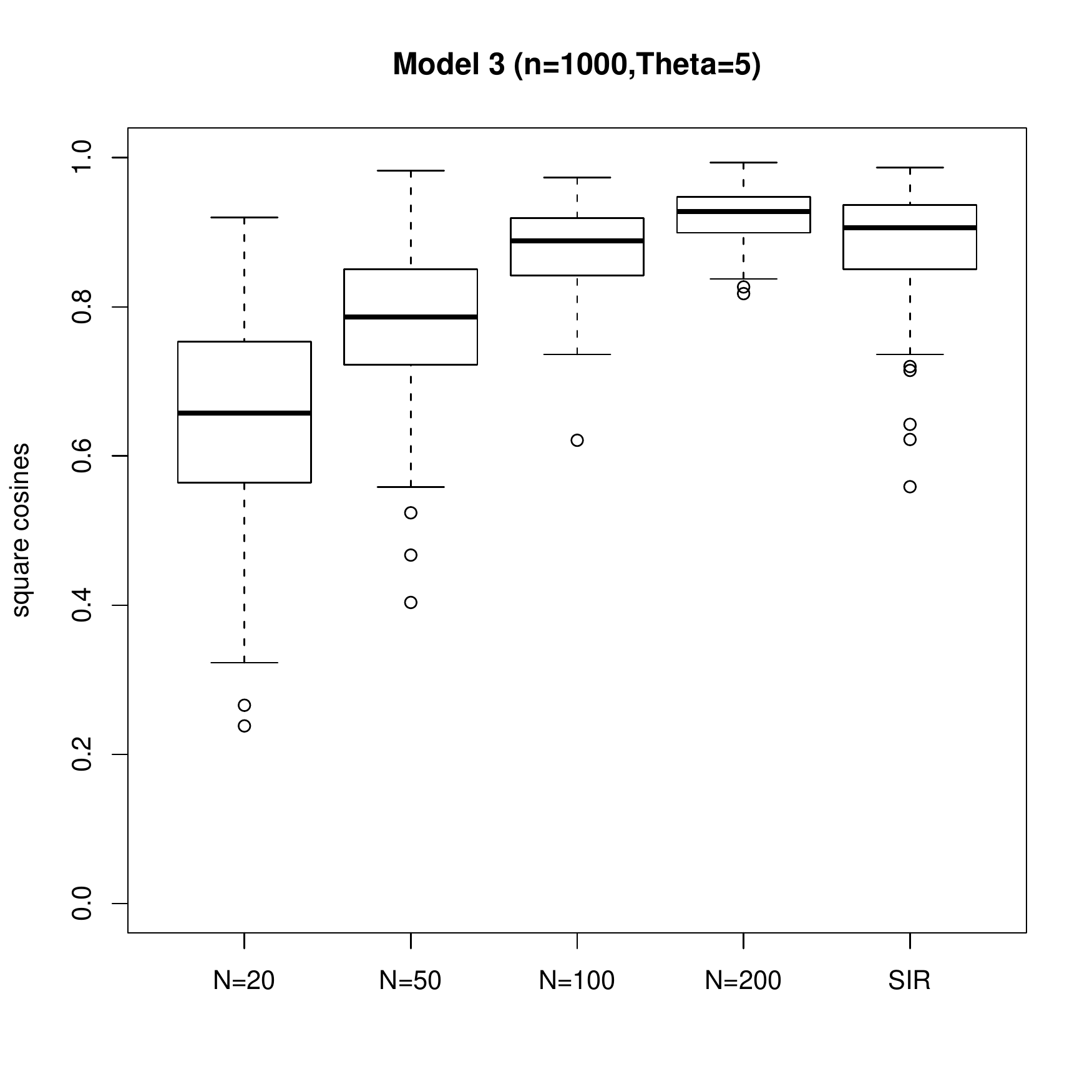} &
   \includegraphics[width=7cm, height=6cm]{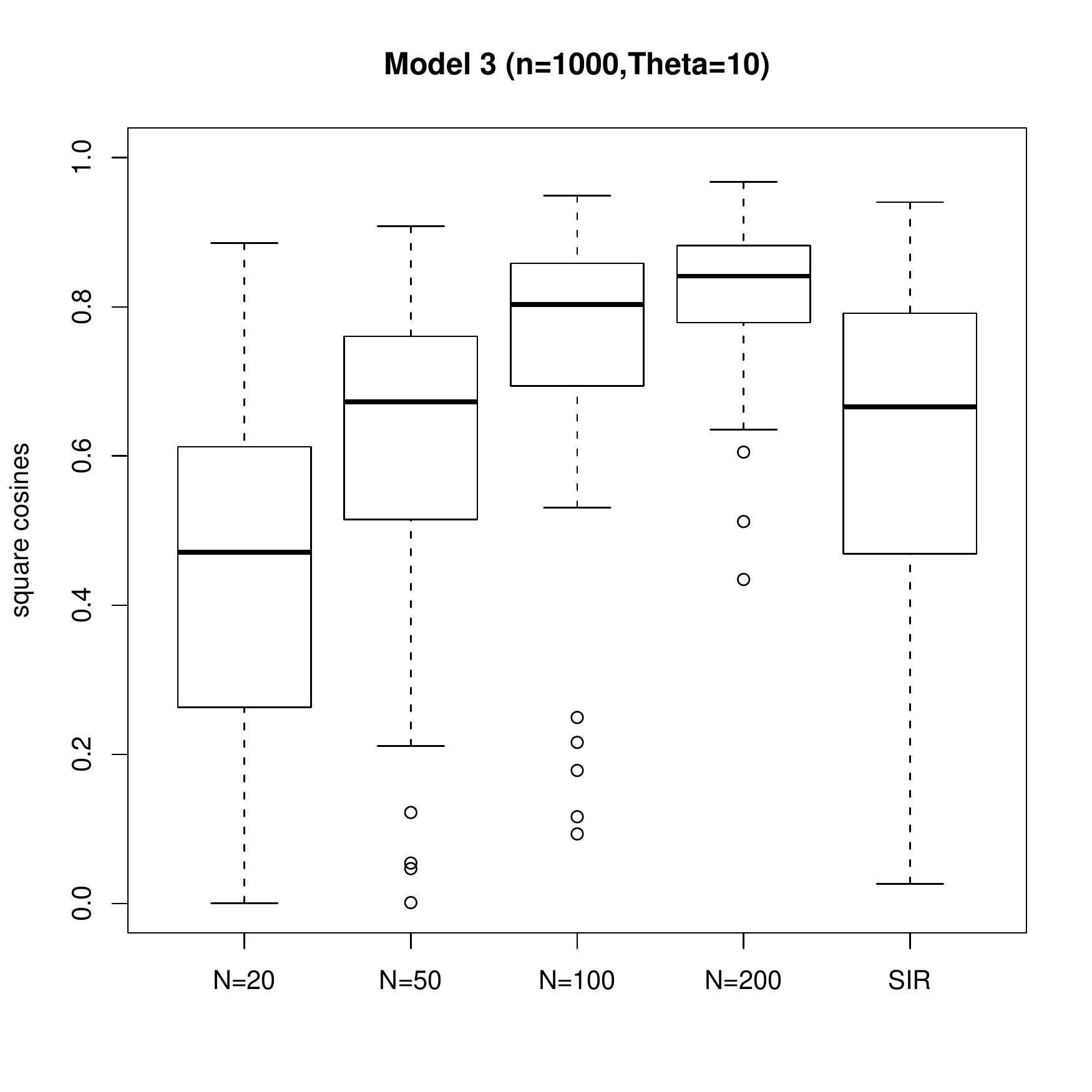}
\end{tabular}
\caption{Boxplots of the square cosines in the estimation of the regression parameter $\beta$ with $n=1000$}\label{figure2}
\end{figure}
\end{landscape}

\begin{table}[!htbp]
\centering
\begin{tabular}{c| c | c  | c | c  | c }
 \multicolumn{6}{c}{\textbf{Conditional expectation}}	\\
~			& Model $({\mathcal{M}}_1)$	&  Model $({\mathcal{M}}_2)$	&	Model $({\mathcal{M}}_3)$ 		& Model $({\mathcal{M}}_3)$ 		& Model $({\mathcal{M}}_3)$	\\
~			&	~	& ~		&	with $\theta=1$		&	with $\theta=5$	& with $\theta=10$		\\ \hline
true value		& 1		&	1  	&	2.72			&	1.22		&	1.11		\\
estimation		& 1.07		&	0.84	&	2.56			& 1.02			& 1.11			\\
relative error		& 0.07		&	-0.16	&	-0.06			&	-0.16		&	0.00	\\ 
\multicolumn{6}{c}{~} \\
\multicolumn{6}{c}{\textbf{Conditional variance}}	\\
~			& Model $({\mathcal{M}}_1)$	&  Model $({\mathcal{M}}_2)$	&	Model $({\mathcal{M}}_3)$ 		& Model $({\mathcal{M}}_3)$ 		& Model $({\mathcal{M}}_3)$	\\
~			&	~	& ~		&	with $\theta=1$		&	with $\theta=5$	& with $\theta=10$		\\ \hline
true value		& 1		&	1  	&	1			&	1		&	1		\\
estimation		& 1.01		&	0.92	&	1.14			& 1.05			& 	1.07	\\
relative error		& 0.01		&	-0.08	&	0.14			&	0.05		&	0.07		\\
\end{tabular}
\caption{Forecasting of $Y$ given $X=(0.5 , -0.5,1,0)'$} \label{tab:x1}
\end{table}

\vspace{3cm}

\begin{table}[!htbp]
\centering
\begin{tabular}{c| c | c  | c | c  | c }
 \multicolumn{6}{c}{\textbf{Conditional expectation}}	\\
~			& Model $({\mathcal{M}}_1)$	&  Model $({\mathcal{M}}_2)$	&	Model $({\mathcal{M}}_3)$ 		& Model $({\mathcal{M}}_3)$ 		& Model $({\mathcal{M}}_3)$	\\
~			&	~	& ~		&	with $\theta=1$		&	with $\theta=5$	& with $\theta=10$		\\ \hline
true value		& -0.56		&	-0.58	&	0.30			&	0.59		&0.64		\\
estimation		& -0.64		&	-0.51	&	0.52			& 0.80			& 0.61		\\
relative error		& 0.11		&	-0.12	&	0.71			&	0.36		&	-0.05		\\ 
\multicolumn{6}{c}{~} \\
\multicolumn{6}{c}{\textbf{Conditional variance}}	\\
~			& Model $({\mathcal{M}}_1)$	&  Model $({\mathcal{M}}_2)$	&	Model $({\mathcal{M}}_3)$ 		& Model $({\mathcal{M}}_3)$ 		& Model $({\mathcal{M}}_3)$	\\
~			&	~	& ~		&	with $\theta=1$		&	with $\theta=5$	& with $\theta=10$		\\ \hline
true value		& 1		&	0.69  	&	1			&	1		&	1		\\
estimation		& 1.01		&	0.80	&	0.77			& 1.01			& 	1.09		\\
relative error		& 0.01		&	0.15	&	-0.23			&	0.01		&	0.09		\\
\end{tabular}
\caption{Forecasting of $Y$ given $X=(-1/3 , 0.5,1,1)'$} \label{tab:x2}
\end{table}

\begin{figure}[!htbp]\label{figure3}
\centering
   \includegraphics[height=6cm,width=10cm]{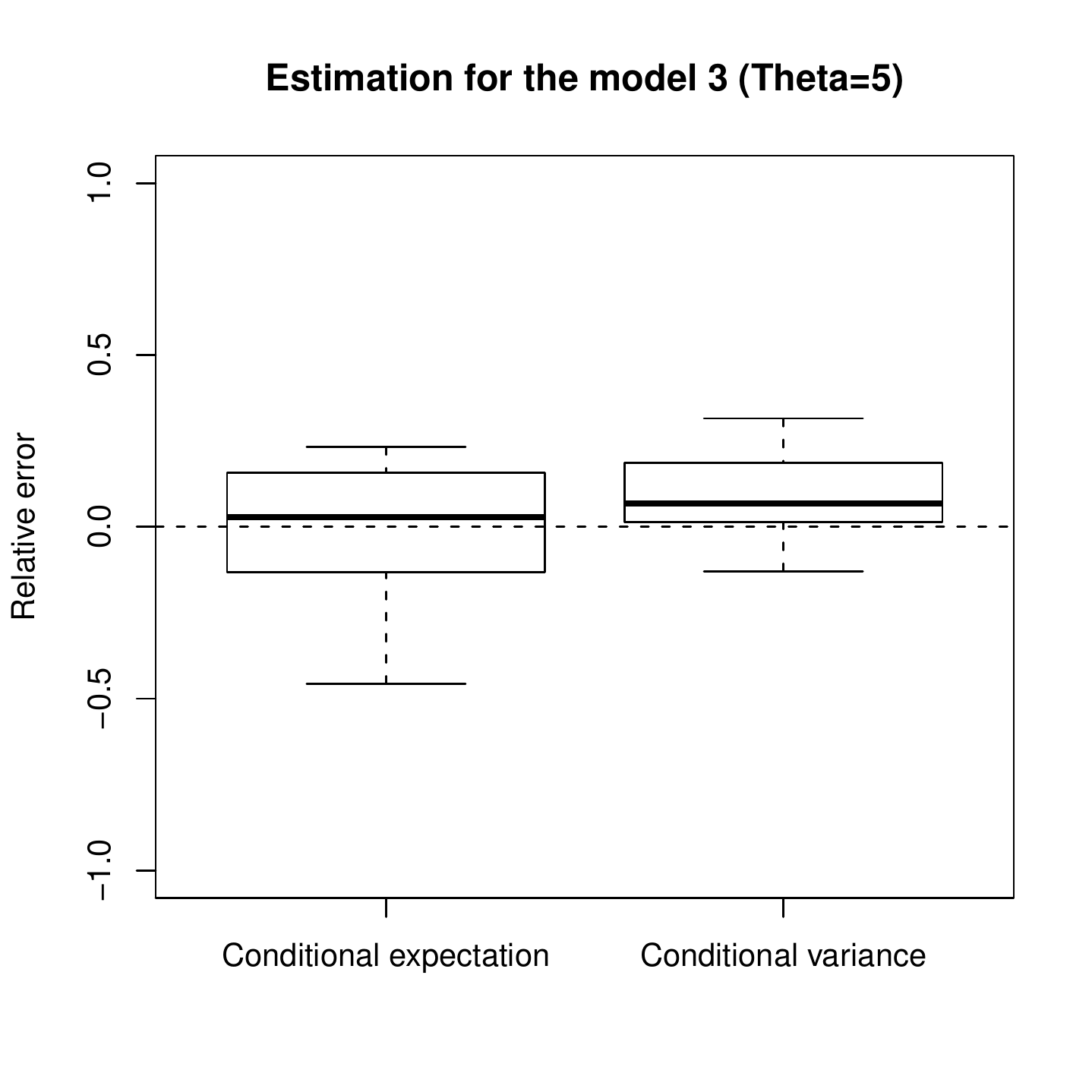} 
\caption{Relative error in the estimation of the conditional expectation and variance} \label{tab:boxpred}
\end{figure}

\newpage

\appendix
\section{Proofs}
\label{sproofs}

\subsection{Proof of Theorem \ref{label-pred1}}

Let us define $F(u)=\esp\big[ \phi(Y) | U=u \big]$ and $\hat{F}(\hat{u})=\esp\big[ \phi(\hat{Y}) | \hat{U} = \hat{u} \big]$.
By triangle inequality, we have
\begin{eqnarray*}
 \big\| F(U) - \hat{F}(\hat{U}) \big\|_p &\leq& \big\|F(U)-F(\hat{U}) \big\|_p 
~+~ \big\| F( \hat{U}) - \esp\big[ \phi(Y) | \hat{U} \big] \big\|_p  \\~&~& \qquad \qquad + ~\big\| \esp\big[ \phi(Y) | \hat{U} \big] - \hat{F}(\hat{U}) \big\|_p.
\end{eqnarray*}
Furthermore, using Lipschitz property of $\phi$ and $\tilde{f}$ and the independance between $U$ and $\epsilon$, we have
\begin{eqnarray*}
 \forall u, v \in \mathbf{R}^d, ~~ |F(u)-F(v)| &\leq & [\phi]_{Lip} [\tilde{f}]_{Lip} |u-v|.
\end{eqnarray*}
Then by $\mathbf{L}^p$-contraction property of conditional expectation, we get
$$ \big\| F( \hat{U}) - \esp\big[ \phi(Y) | \hat{U} \big] \big\|_p \leq  \big\|F(U)-F(\hat{U}) \big\|_p 
~\leq  [\phi]_{Lip} [\tilde{f}]_{Lip} \big\|U-\hat{U} \big\|_p.$$
Moreover since $\hat{F}( \hat{U}) - \esp\big[ \phi(Y) | \hat{U} \big] = \esp\big[ \phi(\hat{Y}) - \phi(Y) | \hat{U} \big]$, thus 
using again $\mathbf{L}^p$-contraction property of conditional expectation and Lipschitz property of $\phi$, we obtain:
$$\big\| \hat{F}( \hat{U}) - \esp\big[ \phi(Y) | \hat{U} \big] \big\|_p \leq  \big\| \phi(Y) - \phi( \hat{Y}) \big\|_p 
~\leq  [\phi]_{Lip}  \big\|Y-\hat{Y} \big\|_p.$$
This yields the expected inequality. $\Box$

\subsection{Proof of Theorem \ref{label-estimation1}}

Before proving Theorem~ \ref{label-estimation1}, we first give in Lemma~\ref{lemma1} a useful bound of the distance between $\hat{\Gamma}$ and $\hat{\Gamma}_N$.
\begin{lemme}\label{lemma1}
Under $(\mathcal{A}_{5 \to 10})$, we have
$ \| \hat{\Gamma} - \hat{\Gamma}_N \|_{\infty} \leq 2 d \| X - \hat{X}^N \|_p \| X \|_q.$
\end{lemme}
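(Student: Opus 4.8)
The plan is to express $\hat{\Gamma}$ and $\hat{\Gamma}_N$ as second moments of conditional expectations, observe that their centering terms coincide, and then estimate the difference entry by entry with Hölder's inequality before passing to the matrix norm.

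First I would set $Z := \esp[X \mid \hat{Y}]$ and $Z_N := \esp[\hat{X}^N \mid \hat{Y}]$, so that $\hat{\Gamma} = \var(Z)$ and $\hat{\Gamma}_N = \var(Z_N)$; both are $\mathbf{L}^p\cap\mathbf{L}^q$ random vectors in $\mathbf{R}^d$ by $(\mathcal{A}_7)$ and the contraction of conditional expectation. Taking expectations in the stationarity identity (\ref{stationnarity}) gives $\esp[\hat{X}^N] = \esp[X] = \mu$, hence $\esp[Z] = \esp[Z_N] = \mu$ and the centering terms cancel:
\[
\hat{\Gamma} - \hat{\Gamma}_N = \esp[ZZ'] - \esp[Z_N Z_N'] = \esp\big[ Z\,W' + W\,Z_N'\big], \qquad W := Z - Z_N = \esp\big[ X - \hat{X}^N \mid \hat{Y}\big],
\]
where I used the elementary identity $ZZ' - Z_N Z_N' = Z(Z-Z_N)' + (Z-Z_N)Z_N'$.

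Next I would bound the $(i,j)$ entry of this matrix. By Hölder's inequality with the conjugate exponents $p$ and $q$ of $(\mathcal{A}_7)$,
\[
\big|(\hat{\Gamma} - \hat{\Gamma}_N)_{ij}\big| \le \esp\big[|Z_i|\,|W_j|\big] + \esp\big[|W_i|\,|(Z_N)_j|\big] \le \|Z_i\|_q\,\|W_j\|_p + \|W_i\|_p\,\|(Z_N)_j\|_q.
\]
The $\mathbf{L}^r$-contraction of conditional expectation gives $\|Z_i\|_q = \|\esp[X_i\mid\hat{Y}]\|_q \le \|X_i\|_q \le \|X\|_q$; likewise $\|(Z_N)_j\|_q \le \|\hat{X}^N_j\|_q$, and a second use of (\ref{stationnarity}) in the form $\hat{X}^N = \esp[X\mid\hat{X}^N]$ yields $\|\hat{X}^N_j\|_q \le \|X_j\|_q \le \|X\|_q$. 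Finally $\|W_j\|_p \le \|X_j - \hat{X}^N_j\|_p \le \|X - \hat{X}^N\|_p$, since the Euclidean norm dominates each coordinate. Altogether $\big|(\hat{\Gamma} - \hat{\Gamma}_N)_{ij}\big| \le 2\,\|X - \hat{X}^N\|_p\,\|X\|_q$ for all $i,j$.

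It then remains to sum: as $\|\cdot\|_\infty$ is the matrix norm subordinate to the $\ell^\infty$ vector norm on $\mathbf{R}^d$, i.e. the largest absolute row sum, adding the $d$ entrywise bounds along a row gives $\|\hat{\Gamma} - \hat{\Gamma}_N\|_\infty \le 2d\,\|X - \hat{X}^N\|_p\,\|X\|_q$, as claimed. I expect the only point requiring real care to be the bookkeeping around the centering terms together with the two distinct appeals to the stationarity property — once to identify $\esp[\hat{X}^N]$ with $\mu$, and once to control $\|\hat{X}^N\|_q$ by $\|X\|_q$ — everything else being a routine Hölder-and-contraction estimate.
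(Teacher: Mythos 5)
Your proof is correct, and it uses a genuinely different — and slicker — algebraic decomposition than the paper's. The paper proceeds entrywise but treats diagonal and off-diagonal entries separately: for $A_N(i,i)$ it factors $\esp[X|\hat Y]_i^2 - \esp[\hat X^N|\hat Y]_i^2$ as a product of a sum and a difference, while for $A_N(i,j)$ with $i\neq j$ it writes the covariance difference two symmetric ways, averages them, and only then obtains a factored expression amenable to H\"older. You instead introduce $Z,Z_N,W$ and use the single telescoping identity $ZZ' - Z_NZ_N' = Z\,W' + W\,Z_N'$, which covers every entry uniformly and sidesteps the paper's symmetrization trick entirely. The ingredients downstream are identical in both arguments: stationarity (\ref{stationnarity}) to kill the centering terms and to bound $\|\hat X^N\|_q$ by $\|X\|_q$, H\"older with conjugate exponents $p,q$, $\mathbf L^r$-contraction of conditional expectation, and finally the row-sum definition of $\|\cdot\|_\infty$ to pick up the factor $d$. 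Your route reaches the same constant $2d$ with less case analysis; the paper's route makes the symmetry of $A_N$ explicit, which some readers may find more transparent, but is otherwise longer for the same payoff.

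Two minor bookkeeping points worth flagging, neither of which is an error: the cancellation of the centering terms relies on $\esp[\hat X^N]=\esp[X]$, which you correctly derive from taking expectations in (\ref{stationnarity}); and your chain $\|W_j\|_p \le \|(X-\hat X^N)_j\|_p \le \|X-\hat X^N\|_p$ uses contraction for the first step and coordinate domination by the Euclidean norm for the second, exactly as the paper does implicitly.
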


\begin{proof}
Let $A_N = \hat{\Gamma} - \hat{\Gamma}_N$. We control all the terms of this symmetric matrix hereafter.
	\begin{itemize}
	 \item \textbf{Study of the diagonal terms.} For all $1\leq i \leq d$, we have
	\begin{eqnarray*}
	 A_N (i,i) &=& \var \big[ \esp[X|\hat{Y}]_i \big] - \var \big[ \esp[ \hat{X}^N | \hat{Y}]_i \big] \\
	~	&=&    \esp \big[ \esp[X|\hat{Y}]_i^2 \big] - \esp[X]_i^2 - \esp \big[ \esp[\hat{X}^N|\hat{Y}]_i^2 \big] + \esp[\hat{X}^N]_i^2.
	\end{eqnarray*}
	Using the stationary property (\ref{stationnarity}), we obtain
	\begin{eqnarray*}
 	A_N (i,i) &=& \esp \big[ \esp[X|\hat{Y}]_i^2 \big] - \esp \big[ \esp[\hat{X}^N|\hat{Y}]_i^2 \big] \\
		~ &=& \esp \Big[ \big( \esp[X|\hat{Y}]_i - \esp[\hat{X}^N | \hat{Y}]_i \big) \big( \esp[X|\hat{Y}]_i + \esp[\hat{X}^N | \hat{Y}]_i \big) \Big].
	\end{eqnarray*}
	By H{\"o}lder inequality, $\mathbf{L}^p$ and $\mathbf{L}^q$-contraction property of conditional expectation and stationary property (\ref{stationnarity}), we get
	\begin{eqnarray*}
	 |A_N (i,i)| &\leq& \big\| \esp[ X - \hat{X}^N | \hat{Y} ]_i \big\|_p \big\| \esp[ X + \hat{X}^N| \hat{Y}]_i \big\|_q \\
	~&\leq& \| (X - \hat{X}^N)_i \|_p \| (X+\hat{X}^N)_i \|_q \\
	~&\leq& \| X-\hat{X}^N \|_p \big( \|X\|_q + \|\hat{X}^N \|_q \big) \\
	~&\leq& 2 \| X -\hat{X}^N\|_p \|X\|_q.
	\end{eqnarray*}

	\item \textbf{Study of the non-diagonal terms.} For all $(i,j)\in \{1,\dots,d\}^2$ such that $i \neq j$, we have
	\begin{eqnarray*}
	 A_N (i,j) &=& \cov\Big[ \esp[X|\hat{Y}]_i , \esp[X|\hat{Y}]_j\Big] - \cov\Big[ \esp[\hat{X}^N | \hat{Y}]_i , \esp[\hat{X}^N | \hat{Y}]_j \Big] \\
	~&=& \esp\Big[ \esp[X|\hat{Y}]_i \esp[X|\hat{Y}]_j - \esp[\hat{X}^N | \hat{Y}]_i \esp[\hat{X}^N |\hat{Y}]_j \Big] \\
	~&=& \esp\Big[ \Big( \esp[X|\hat{Y}]_i - \esp[\hat{X}^N | \hat{Y}]_i\Big) \Big( \esp[X|\hat{Y}]_j + \esp[\hat{X}^N | \hat{Y}]_j\Big) \Big] \\
	~&~& ~~~+ \esp\big[  \esp[\hat{X}^N | \hat{Y}]_i \esp[X|\hat{Y}]_j \big] - \esp\big[  \esp[\hat{X}^N | \hat{Y}]_j \esp[X|\hat{Y}]_i \big].
	\end{eqnarray*}
	By symmetry, we also have
	\begin{eqnarray*}
	 A_N (i,j) &=& \esp\Big[ \Big( \esp[X|\hat{Y}]_j - \esp[\hat{X}^N | \hat{Y}]_j\Big) \Big( \esp[X|\hat{Y}]_i + \esp[\hat{X}^N | \hat{Y}]_i\Big) \Big] \\
	~&~& ~~~+ \esp\big[  \esp[\hat{X}^N | \hat{Y}]_j \esp[X|\hat{Y}]_i \big] - \esp\big[  \esp[\hat{X}^N | \hat{Y}]_i \esp[X|\hat{Y}]_j \big].
	\end{eqnarray*}
	Then summing the two latest formulae, we get
	\begin{eqnarray*}
	2 A_N (i,j) &=& \esp\Big[ \Big( \esp[X|\hat{Y}]_i - \esp[\hat{X}^N | \hat{Y}]_i\Big) \Big( \esp[X|\hat{Y}]_j + \esp[\hat{X}^N | \hat{Y}]_j\Big) \Big] \\
	~&~& ~~~ + \esp\Big[ \Big( \esp[X|\hat{Y}]_j - \esp[\hat{X}^N | \hat{Y}]_j\Big) \Big( \esp[X|\hat{Y}]_i + \esp[\hat{X}^N | \hat{Y}]_i\Big) \Big]
	\end{eqnarray*}
	As for diagonal terms, by H{\"o}lder inequality and $\mathbf{L}^p$ and $\mathbf{L}^q$-contraction property of conditional expectation, we obtain
	\begin{eqnarray*}
	 |A_N (i,j)| &\leq& 2 \| X - \hat{X}^N \|_p \|X\|_q.
	\end{eqnarray*}
	\end{itemize}
	Finally, we obtain
	$$ \| A_N \|_\infty = \max_{1\leq i \leq d} \sum_{j=1}^d |A_N (i,j)| \leq 2 d \| X - \hat{X}^N \|_p \|X\|_q,$$
	and the proof of the lemma is complete. $\Box$
\end{proof}

\medskip

\noindent
Lemma~\ref{lemma1} yields that the eigenvalues of $\Sigma^{-1} \hat{\Gamma}_N$ tend to the eigenvalues of $\Sigma^{-1} \hat{\Gamma}$ as $N$ goes to infinity. Particularly, if $\hat{\rho}$ and $\hat{\rho}_N$ denote the principal eigenvalue of $\Sigma^{-1} \hat{\Gamma}$ and $\Sigma^{-1}\hat{\Gamma}_N$, we have
	$$ \hat{\rho}_N \to \hat{\rho} >0~~\textrm{as $N \to \infty$}.$$
Consider $z_N$ the principal eigenvector of norm $1$ of $\Sigma^{-1} \hat{\Gamma}_N$ such that $z_N = \alpha_N \beta + z_N^{ \perp }$ where $z_N^\perp \perp \beta$ and $\alpha_N \geq 0$. Thus we have
	\begin{eqnarray*}
	 \hat{\rho}_N z_N &=& \Sigma^{-1} \hat{\Gamma}_N z_N \\
		~		&=& \Sigma^{-1} \Big\{ ( \hat{\Gamma}_N - \hat{\Gamma}) z_N + \hat{\Gamma} (\alpha_N \beta + z_N^\perp ) \Big\}.
	\end{eqnarray*}
From SIR theory, the rank of $\hat{\Gamma}$ is $1$ and $\beta$ is a principal eigenvector of $\Sigma^{-1} \hat{\Gamma}$. Thus the kernel of $\Sigma^{-1} \hat{\Gamma}$ is $(d-1)$-dimensional and contains $z_N^\perp$. 
Consequently, we get $
	 \hat{\rho}_N z_N  - \alpha_N \hat{\rho} \beta = \Sigma^{-1}  (\hat{\Gamma}_N - \hat{\Gamma}) z_N  $.
Now, let us show that the sequence $(\alpha_N)_{N\geq1}$ has a strictly positive limit. We have
	\begin{eqnarray*}
	 \big| \hat{\rho}_N z_N  - \alpha_N \hat{\rho} \beta \big| &\leq & \big\| \Sigma^{-1} \big\|_{\infty} \big\| \hat{\Gamma}_N - \hat{\Gamma} \big\|_\infty \to 0 ~~\textrm{as $N \to \infty.$}
	\end{eqnarray*}
	Since  $(\hat{\rho}_N)$ has a nonzero limit and $\hat{x}_N$ is norm $1$ for any $N$, we have
	$\displaystyle  \lim_{N \to \infty} \alpha_N  = \frac{1}{|\beta|}.$
	Let $N_1$ be an integer such that for all $N \geq N_1$, $\alpha_N >0$. Let $C = \min_{ N \geq N_1} \alpha_N$. Moreover for all $N \geq N_1$, let
	$\displaystyle \beta_N = \frac{z_N}{\alpha_N} \frac{\hat{\rho}_N}{\hat{\rho}}.$
	Thus for all $N \geq N_1$, we have
	\begin{eqnarray*}
	\big| \beta_N  - \beta \big| &\leq & \frac{1}{C} \big\| \Sigma^{-1} \big\|_{\infty} \big\| \hat{\Gamma}_N - \hat{\Gamma} \big\|_\infty.
	\end{eqnarray*}
	Thanks to Lemma~\ref{lemma1} and Theorem~\ref{label-pierce}, we obtain the expected result. $\Box$

\subsection{Proof of Theorem \ref{th-cosca}}

\noindent
Let us consider the sequence $(\beta_N)$ defined in the previous proof. Then for all $N \geq N_1$, there exists some $\lambda_N$ such that $\tilde{\beta}_N = \lambda_N \beta_N$ and
\begin{eqnarray*}
 \cos^2 ( \tilde{\beta}_N , \beta) &=&  \cos^2 ({\beta}_N , \beta)  \\
 ~&=& \Big(  \frac{ \hat{\rho}_N }{ \hat{ \rho} } \Big)^2   \frac{  | \beta |^2  }{ | \beta_N|^2 } \to 1 ~\textrm{ as $N \to \infty$.} ~\Box
 \end{eqnarray*}

\subsection{Proof of Theorem \ref{label-forecast1}}

\noindent
Let $Y_N = f( \beta'_N X , \epsilon)$ and let $\widehat{Y_N}^m$ be its projection on an optimal $m$-grid in $\mathbf{L}^p$-norm. First, we show that the forecast error is smaller than a sum of four terms given in Lemma~\ref{label-lemma1-forecast1}. Then we control each term to complete the proof.

\begin{lemme} 
\label{label-lemma1-forecast1}
	For any Lipschitz function $\phi$, we have,  for all $N$ and $m$,
	\begin{eqnarray*}
	\Big\| \esp\big[ \phi(Y) | \beta'X \big] - \esp\Big[ \phi( \hat{Y}^m ) | \widehat{ \beta'_N X}^m \Big] \Big\|_1
	& \leq & 4 [f]_{Lip} [\phi]_{Lip} \|X\|_p | \beta - \beta_N | \\
	~&+&  2 [\phi]_{Lip} \Big\| Y_N - \widehat{Y_N}^m \Big\|_p \\
	~&+& 2 [f]_{Lip} [\phi]_{Lip} \Big\| \beta_N' X - \widehat{\beta_N' X}^m \Big\|_p \\
	~&+& [\phi]_{Lip} \|Y - \hat{Y}^m\|_p.
	\end{eqnarray*}
\end{lemme}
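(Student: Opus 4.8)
The strategy is to insert two intermediate objects between $\esp[\phi(Y)\mid\beta'X]$ and $\esp[\phi(\hat Y^m)\mid\widehat{\beta_N'X}^m]$ and control the four resulting pieces by the triangle inequality in $\mathbf L^1$. The natural intermediate quantities are $\esp[\phi(Y_N)\mid\beta_N'X]$, where $Y_N=f(\beta_N'X,\epsilon)$, and $\esp[\phi(\widehat{Y_N}^m)\mid\widehat{\beta_N'X}^m]$. So I would write
\begin{eqnarray*}
\Big\| \esp[\phi(Y)\mid\beta'X]-\esp[\phi(\hat Y^m)\mid\widehat{\beta_N'X}^m]\Big\|_1
&\leq& \Big\| \esp[\phi(Y)\mid\beta'X]-\esp[\phi(Y_N)\mid\beta_N'X]\Big\|_1\\
&+& \Big\| \esp[\phi(Y_N)\mid\beta_N'X]-\esp[\phi(\widehat{Y_N}^m)\mid\widehat{\beta_N'X}^m]\Big\|_1\\
&+& \Big\| \esp[\phi(\widehat{Y_N}^m)\mid\widehat{\beta_N'X}^m]-\esp[\phi(\hat Y^m)\mid\widehat{\beta_N'X}^m]\Big\|_1,
\end{eqnarray*}
and then bound each line. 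The first line is the ``model perturbation'' term: replacing $\beta$ by $\beta_N$ both inside $f$ and in the conditioning $\sigma$-algebra. The second line is exactly the situation covered by Theorem~\ref{label-pred1} applied to the regression model $Y_N=f(\beta_N'X,\epsilon)$ with covariate $U=\beta_N'X$ (whose Lipschitz constant is $[f]_{Lip}|\beta_N|$ after composing with $x\mapsto\beta_N'x$, but one keeps it as $[f]_{Lip}$ times the $\mathbf L^p$-quantization error of the scalar index since $|\beta_N'u-\beta_N'v|\le|\beta_N|\,|u-v|$ is not needed once we work directly with the index). The third line compares $\phi(\widehat{Y_N}^m)$ and $\phi(\hat Y^m)$, i.e. the effect of quantizing $Y_N$ versus $Y$; by the $\mathbf L^1$/$\mathbf L^p$-contraction of conditional expectation and the Lipschitz property of $\phi$ it is at most $[\phi]_{Lip}\|\widehat{Y_N}^m-\hat Y^m\|_p$, which I would further split as $[\phi]_{Lip}(\|\widehat{Y_N}^m-Y_N\|_p+\|Y_N-Y\|_p+\|Y-\hat Y^m\|_p)$.

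\textbf{Handling each term.} For the first line I would again use the triangle inequality, inserting $\esp[\phi(Y_N)\mid\beta'X]$: the difference $\|\esp[\phi(Y)\mid\beta'X]-\esp[\phi(Y_N)\mid\beta'X]\|_1$ is controlled by $\mathbf L^1$-contraction and Lipschitz continuity of $\phi$ by $[\phi]_{Lip}\|Y-Y_N\|_p=[\phi]_{Lip}\|f(\beta'X,\epsilon)-f(\beta_N'X,\epsilon)\|_p\le[\phi]_{Lip}[f]_{Lip}\|(\beta-\beta_N)'X\|_p\le[\phi]_{Lip}[f]_{Lip}\|X\|_p|\beta-\beta_N|$ using independence of $\epsilon$ and $X$, $(\mathcal A_{10})$, and Cauchy--Schwarz on $|(\beta-\beta_N)'X|\le|\beta-\beta_N|\,|X|$. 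The remaining piece $\|\esp[\phi(Y_N)\mid\beta'X]-\esp[\phi(Y_N)\mid\beta_N'X]\|_1$ is the genuinely delicate one: here the two conditional expectations are with respect to two different $\sigma$-algebras $\sigma(\beta'X)$ and $\sigma(\beta_N'X)$. The way to bound it is to note $Y_N$ is $\sigma(\beta_N'X,\epsilon)$-measurable, write $\esp[\phi(Y_N)\mid\beta_N'X]=g_N(\beta_N'X)$ for the Lipschitz-type regression function $g_N(t)=\esp[\phi(f(t,\epsilon))]$ (Lipschitz with constant $[\phi]_{Lip}[f]_{Lip}$ by $(\mathcal A_{10})$ and independence), and similarly $\esp[\phi(Y_N)\mid\beta'X]=\esp[g_N(\beta_N'X)\mid\beta'X]$ by the tower property since $\sigma(\beta_N'X)$ is not contained in $\sigma(\beta'X)$ — actually one uses $\esp[\phi(Y_N)\mid\beta'X]=\esp[\esp[\phi(Y_N)\mid\beta'X,\beta_N'X,\epsilon]\mid\beta'X]$; the cleanest route is to observe both equal $\esp[g_N(\beta_N'X)\mid\,\cdot\,]$ with the inner being $g_N(\beta_N'X)$, so the difference is $\|g_N(\beta_N'X)-\esp[g_N(\beta_N'X)\mid\beta'X]\|_1\le 2\|g_N(\beta_N'X)-\esp[g_N(\beta'X)\mid\text{anything measurable}]\|$ — more concretely, by $\mathbf L^1$-contraction $\|g_N(\beta_N'X)-\esp[g_N(\beta_N'X)\mid\beta'X]\|_1\le 2\,\mathrm{dist}_{\mathbf L^1}(g_N(\beta_N'X),\,L^1(\sigma(\beta'X)))\le 2\|g_N(\beta_N'X)-g_N(\beta'X)\|_1\le 2[\phi]_{Lip}[f]_{Lip}\|(\beta_N-\beta)'X\|_p\le 2[\phi]_{Lip}[f]_{Lip}\|X\|_p|\beta-\beta_N|$, using that $g_N(\beta'X)$ is $\sigma(\beta'X)$-measurable and that $\esp[\cdot\mid\mathcal G]$ is the $\mathbf L^2$- (hence best $\mathbf L^1$- up to factor $2$, or directly $\mathbf L^1$-contractive) projection. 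Adding these gives the $4[f]_{Lip}[\phi]_{Lip}\|X\|_p|\beta-\beta_N|$ term. For the second line, Theorem~\ref{label-pred1} with $\phi$, $U=\beta_N'X$, $\tilde f(u,\epsilon)=f(u,\epsilon)$ (so $[\tilde f]_{Lip}=[f]_{Lip}$) and $Y=Y_N$ yields the bound $2[\phi]_{Lip}[f]_{Lip}\|\beta_N'X-\widehat{\beta_N'X}^m\|_p+[\phi]_{Lip}\|Y_N-\widehat{Y_N}^m\|_p$; checking the hypotheses $(\mathcal A_{1\to3})$ for this model needs $(\mathcal A_7)$, $(\mathcal A_{10})$, $(\mathcal A_{11})$, $(\mathcal A_{12})$ and that $\beta_N'X\in\mathbf L^p$ (from $X\in\mathbf L^p$) and does not charge hyperplanes (from $(\mathcal A_8)$, since a nondegenerate linear image of such a law still does not charge hyperplanes of $\mathbf R$, i.e. points). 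Collecting the three lines in the order above produces exactly the four asserted terms.

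\textbf{Main obstacle.} The routine parts are the Lipschitz/contraction estimates; the only genuinely subtle step is the ``change of conditioning $\sigma$-algebra'' estimate $\|\esp[\phi(Y_N)\mid\beta'X]-\esp[\phi(Y_N)\mid\beta_N'X]\|_1\le$ (const)$\cdot|\beta-\beta_N|\,\|X\|_p$, because $\sigma(\beta'X)$ and $\sigma(\beta_N'X)$ are not nested. The key observation that makes it work is that $\esp[\phi(Y_N)\mid\beta_N'X]=g_N(\beta_N'X)$ with $g_N$ Lipschitz, so both conditional expectations are ($\mathbf L^1$-contractions of) a common Lipschitz function of $X$ evaluated along two nearby linear functionals, and the distance between $g_N(\beta'X)$ and $g_N(\beta_N'X)$ is linear in $|\beta-\beta_N|$; the factor $2$ (yielding $4$ overall in this line, matching the $4[f]_{Lip}[\phi]_{Lip}$ in the lemma) comes from bounding the distance to a subspace by twice the distance to a single convenient element of it. Once that term is in hand, everything else is a direct application of $\mathbf L^p$-contraction, the Lipschitz hypotheses $(\mathcal A_{10})$, Hölder/Cauchy--Schwarz with $(\mathcal A_7)$, independence of $X$ and $\epsilon$, and Theorem~\ref{label-pred1}.
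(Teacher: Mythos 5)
Your proof is correct and reaches the same four bounds with the same constants, but the route through the ``model perturbation'' part is genuinely different from the paper's. The paper writes the difference $\esp[\phi(Y)\mid\beta'X]-\esp[\phi(Y_N)\mid\beta_N'X]$ as a three-term chain by inserting $F(\beta_N'X)$ and $\esp[\phi(Y)\mid\beta_N'X]$, and the pivotal step is the identity $F(\beta_N'X)-\esp[\phi(Y)\mid\beta_N'X]=\esp\big[F(\beta_N'X)-\esp[\phi(Y)\mid X]\mid\beta_N'X\big]$ combined with the model fact $\esp[\phi(Y)\mid X]=F(\beta'X)$, after which $\mathbf L^p$-contraction and the Lipschitz bound on $F$ give a single factor $[\phi]_{Lip}[f]_{Lip}\|X\|_p|\beta-\beta_N|$. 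You instead insert $\esp[\phi(Y_N)\mid\beta'X]$ and must then compare conditional expectations of the same variable against two non-nested $\sigma$-algebras; your key observation that $\esp[\phi(Y_N)\mid\beta_N'X]=g_N(\beta_N'X)$ with $g_N(t)=\esp[\phi(f(t,\epsilon))]$ Lipschitz, together with the elementary factor-$2$ inequality $\|Z-\esp[Z\mid\mathcal G]\|_1\le 2\|Z-W\|_1$ for any $\mathcal G$-measurable $W$, does the job. The paper's version avoids the $\sigma$-algebra issue altogether by leaning directly on the single-index structure of $Y$ itself, which is arguably cleaner; yours shows that the same constant drops out using only the structure of the auxiliary model $Y_N=f(\beta_N'X,\epsilon)$ and a generic projection estimate, which is a perfectly legitimate alternative. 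The handling of the remaining terms (Theorem~\ref{label-pred1} on the $Y_N$ model, the splitting $\|\widehat{Y_N}^m-\hat Y^m\|_p\le\|\widehat{Y_N}^m-Y_N\|_p+\|Y_N-Y\|_p+\|Y-\hat Y^m\|_p$) matches the paper's terms four through six exactly. One small presentational caveat: the aside about $\mathrm{dist}_{\mathbf L^1}$ to $L^1(\sigma(\beta'X))$ is more circuitous than needed and the inequality chain reads backwards at first glance; the direct triangle inequality with $W=g_N(\beta'X)$ is all you actually use, and it suffices.
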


	\begin{proof}
	Let $V=\beta'X$, $V_N=\beta_N'X$ and $F(v)= \esp\big[ \phi(Y) | V=v \big]$. Thus, forecast error can be written this way:
	\begin{eqnarray*}
	\esp[ \phi(Y) | \beta'X] - \esp\Big[ \phi( \hat{Y}^m ) | \widehat{ \beta'_N X}^m \Big] &=& F(V) - F(V_N)\\
	~&+& F(V_N) - \esp\big[ \phi(Y) | \beta_N'X\big] \\
	~&+& \esp\big[ \phi(Y) | \beta_N'X\big] - \esp\big[ \phi(Y_N) | \beta_N'X\big] \\
	~&+& \esp\big[ \phi(Y_N) | \beta_N'X\big] - \esp\Big[ \phi\big(\widehat{Y_N}^m \big) | \widehat{\beta_N'X}^m \Big] \\
	~&+& \esp\Big[ \phi\big(\widehat{Y_N}^m \big) | \widehat{\beta_N'X}^m \Big] - \esp\Big[ \phi({Y_N}) | \widehat{\beta_N'X}^m \Big] \\
	~&+&  \esp\Big[ \phi({Y_N}) | \widehat{\beta_N'X}^m \Big]  - \esp\Big[ \phi( \hat{Y}^m ) | \widehat{ \beta'_N X}^m \Big].
	\end{eqnarray*}
	We will provide a majorant for each term in $\mathbf{L}^p$-norm.\\
	For the first one, we get
	\begin{eqnarray*}
	 \| F(V) - F(V_N) \|_p &\leq & [\phi]_{Lip} [f]_{Lip} \|V-V_N\|_p \\
	~&\leq & [\phi]_{Lip} [f]_{Lip} \| X \|_p |\beta - \beta_N|.
	\end{eqnarray*}
	For the second term, since $\sigma( \beta_N'X) \subset \sigma(X)$, we have
	\begin{eqnarray*}
	 F(V_N) - \esp\big[ \phi(Y) | \beta_N'X\big] &=& \esp\Big[ F(V_N)- \esp[\phi(Y) | X] \big| \beta'_N X \Big].
	\end{eqnarray*}
	Since $\esp[\phi(Y) | X] = \esp[\phi(Y) | \beta'X] = F(V)$ from model assumption, we obtain by $\mathbf{L}^p$-contraction property of conditional expectation:
	\begin{eqnarray*}
	 \big\| F(V_N) - \esp\big[ \phi(Y) | \beta_N'X\big] \big\|_p &\leq& \|F(V_N) - F(V)\|_p \\
	~&\leq& [\phi]_{Lip} [f]_{Lip} \| X\|_p |\beta-\beta_N|.
	\end{eqnarray*}
	For the third term, we get
	\begin{eqnarray*}
	 \Big\| \esp\big[ \phi(Y) | \beta_N'X\big] - \esp\big[ \phi(Y_N) | \beta_N'X\big] \Big\|_p &\leq&
	\| \phi(Y) - \phi(Y_N) \|_p \\
	~&\leq& [\phi]_{Lip} [f]_{Lip} \|X\|_p |\beta-\beta_N|.
	\end{eqnarray*}
	For the fourth term, we apply Theorem~\ref{label-pred1} to $Y_N = f(\beta'_N X, \epsilon)$ and we obtain:
	\begin{eqnarray*}
	 \Big\| \esp\big[ \phi(Y_N) | \beta_N'X\big] - \esp\Big[ \phi\big(\widehat{Y_N}^m \big) | \widehat{\beta_N'X}^m \Big] \Big\|_p &\leq&
	2 [\phi]_{Lip} [f]_{Lip} \big\| \beta'_N X - \widehat{\beta'_N X}^m \big\|_p \\~&+& [\phi]_{Lip} \big\|Y_N - \widehat{Y_N}^m \big\|_p.
	\end{eqnarray*}
	For the fifth term, we have
	\begin{eqnarray*}
	\Big\|  \esp\Big[ \phi\big(\widehat{Y_N}^m \big) | \widehat{\beta_N'X}^m \Big] - \esp\Big[ \phi({Y_N}) | \widehat{\beta_N'X}^m \Big]      \Big\|_p &\leq& \big\| \phi(Y_N) - \phi\big( \widehat{Y_N}^m \big) \big\|_p \\ 
	~&\leq& [\phi]_{Lip} \big\| Y_N - \widehat{Y_N}^m \big\|_p.
	\end{eqnarray*}
	Finally for the last term, we obtain
	\begin{eqnarray*}
	\Big\| \esp\Big[ \phi({Y_N}) | \widehat{\beta_N'X}^m \Big]  &-&\esp\Big[ \phi( \hat{Y}^m ) | \widehat{ \beta'_N X}^m \Big] \Big\|_p  \\
	~& \leq&  \| \phi(Y_N) - \phi(Y) \|_p + \|\phi(Y) - \phi\big( \hat{Y}^m\big) \|_p   \\
	~&\leq&  [\phi]_{Lip} \| Y_N - Y\|_p + [\phi]_{Lip}\|Y-\hat{Y}^m\|_p  \\
	~& \leq& [\phi]_{Lip} [f]_{Lip} \|X\|_p |\beta-\beta_N| + [\phi]_{Lip} \|Y - \hat{Y}^m\|_p. 
	\end{eqnarray*}
	Summing these six inequalities yields the expected result. $\Box$
	\end{proof}

\medskip

\noindent
The rest of the proof of Theorem~\ref{label-forecast1} splits into four parts.
	\begin{enumerate}[(i)]
	 
	\item Using (\ref{etoile}) and Theorem~\ref{label-pierce}, since $\|X\|_{p+\delta} < \infty$ there exist $D_1,D_2,D_3$ such that $\forall N \geq \max(C_0,D_3)$, we have
	\begin{equation}
	| \beta - \beta_N | \leq \frac{\Delta_0}{N^{1/d}}  \Big\{ D_1 \|X\|_{p+\delta}^{p+\delta} + D_2 \Big\}^{1/p}.
	\label{in1}
	\end{equation}

	\item Using Theorem~\ref{label-pierce} again, there exist $C_1,C_2,C_3$ such that $\forall m \geq C_3$, we have
	$$ \Big\| Y_N - \widehat{Y_N}^m \Big\|_p \leq \frac{1}{m} \Big\{ C_1 \|Y_N\|_{p+\delta}^{p+\delta} + C_2\Big\}^{1/p}. $$
	By triangle inequality, we get
	\begin{eqnarray*}
	 \|Y_N\|_{p+\delta} &\leq& \| Y \|_{p+\delta} + \| Y - Y_N \|_{p+\delta} \\
	~&\leq& \|Y\|_{p+\delta} + [f]_{Lip} \|X\|_{{p+\delta}} |\beta-\beta_N| \\
	~&\leq& \|Y\|_{p+\delta} + [f]_{Lip} \|X\|_{p+\delta} \Delta_0 \|X-\hat{X}^N\|_p~~\textrm{for $N\geq C_0$}.
	\end{eqnarray*}
	From (\ref{aaa}), for all $N \geq \max(C_0,D_3)$ we have
	$$
	\|Y_N\|_{p+\delta}^{p+\delta} \leq \Big\{ \|Y\|_{p+\delta} + \frac{\Delta_1 \Delta_2}{N^{1/d}}\Big\}^{p+\delta}
	$$
	where $
	\Delta_1 = \Delta_0 [f]_{Lip} \|X\|_{p+\delta} $  and $\Delta_2 = \Big( D_1 \|X\|_{p+\delta}^{p+\delta} + D_2 \Big)^{1/p}$.
	Finally  $\forall m \geq C_3$ and $\forall N \geq \max(C_0,D_3)$, we have
		\begin{equation} \label{in2}
		 \Big\| Y_N - \widehat{Y_N}^m \Big\|_p \leq \frac{1}{m} \Big\{ C_1 \Big( \frac{\Delta_1 \Delta_2}{N^{1/d}} + \|Y\|_{p+\delta}\Big)^{p+\delta} + C_2 \Big\}^{1/p}.
		  \end{equation}
	
	\item Using Theorem~\ref{label-pierce} again, there exist $ C'_1, C'_2, C'_3 > 0$ such that $\forall m \geq C'_3$,
	\begin{eqnarray*}~
	 \Big\| \beta_N' X - \widehat{\beta_N' X}^m \Big\|_p &\leq& \frac{1}{m} \Big\{ C'_1 \|\beta'_N X\|_{p+\delta}^{p+\delta} + C'_2\Big\}^{1/p} \\
	~&\leq& \frac{1}{m} \Big\{ C'_1 |\beta_N|^{p+\delta} \|X\|_{p+\delta}^{p+\delta} + C'_2\Big\}^{1/p}.
	\end{eqnarray*}
	By (\ref{etoile}) we get
	$$ \forall N \geq C_0,~ \big|\beta_N\big| \leq \big|\beta\big| + \Delta_0 \big\| X- \hat{X}^N\big\|_p. $$
	Using (\ref{aaa}), we have  $\forall N\geq \max(C_0,D_3)$,
	\begin{equation}\label{in3}
	 \Big\| \beta_N' X - \widehat{\beta_N' X}^m \Big\|_p
	 \leq \frac{1}{m} \Big\{ C'_1 \|X\|_{p+\delta}^{p+\delta} \Big( \frac{\Delta_0 \Delta_2}{N^{1/d}} + |\beta|  \Big)^{p+\delta} + C'_2 \Big\}^{1/p}.  
	\end{equation}

	\item Using Theorem~\ref{label-pierce} again, there exist $C''_1,C_2'',C_3''$ such that thanks to $Y \in \mathbf{L}^{p+\delta}$, for all $m \geq C_3''$, we have
	\begin{equation}\label{in4}
	 \|Y- \hat{Y}^m \|_p \leq \frac{1}{m} \big\{ C_1'' \|Y\|_{p+\delta}^{p+\delta} + C_2''\big\}^{1/p}. 
	\end{equation}	
	\end{enumerate}

\medskip

\noindent
Plugging (\ref{in1}), (\ref{in2}), (\ref{in3}) and (\ref{in4}) in the inequality of Lemma~\ref{label-lemma1-forecast1}, we have $\forall m \geq \max(C_3,C'_3,C_3'')$ and $\forall N \geq \max(C_0 , D_3)$,
	\begin{eqnarray*}
	\Big\| \esp\big[ \phi(Y) | \beta'X \big] &- &\esp\Big[ \phi  ( \hat{Y}^m  ) | \widehat{ \beta'_N X}^m \Big] \Big\|_1  \\
	~&     \leq& \frac{1}{{N^{1/d}}} 4 [f]_{Lip} [\phi]_{Lip} \|X\|_p \Delta_0 \Big\{ D_1 \|X\|_{p+\delta}^{p+\delta} + D_2 \Big\}^{1/p}\\
	~&+&   \frac{1}{{m}} 2 [\phi]_{Lip} \Big\{ C_1 \Big( \frac{\Delta_1 \Delta_2}{{N^{1/d}}} + \|Y\|_{p+\delta} \Big)^{p+\delta} + C_2 \Big\}^{1/p} \\
	~&+&   \frac{1}{{m}} 2 [f]_{Lip} [\phi]_{Lip} \Big\{C_1' \|X\|_{p+\delta}^{p+\delta} \Big( \frac{\Delta_0 \Delta_2}{{N^{1/d}}} + |\beta|\Big)^{p+\delta} + C'_2 \Big\}^{1/p} \\
	~&+&   \frac{1}{{m}} \Big\{ C_1'' \|Y\|_{p+\delta}^{p+\delta} + C_2''\Big\}^{1/p}.
	\end{eqnarray*}
	This achieves the proof. $\Box$

\renewcommand{\refname}{Bibliography}
\bibliographystyle{plain}
\bibliography{SIR}
\nocite{*}

\end{document}